\newtheorem{thm}{Theorem}[section]
\newtheorem{lem}[thm]{Lemma}
\newtheorem{pro}[thm]{Proposition}
\newtheorem{cor}[thm]{Corollary}
\theoremstyle{definition}
\newtheorem{exa}[thm]{Example}
\theoremstyle{remark}
\newtheorem{rem}[thm]{Remark}
\newcommand{\R}{\mathbb{R}}
\newcommand{\N}{\mathbb{N}}
\newcommand{\cP}{\mathcal{P}}
\newcommand{\al}{\alpha}
\newcommand{\be}{\beta}
\newcommand{\ga}{\gamma}
\newcommand{\de}{\delta}
\newcommand{\De}{\Delta}
\newcommand{\ep}{\varepsilon}
\newcommand{\om}{\omega}
\newcommand{\si}{\sigma}
\renewcommand{\phi}{\varphi}
\newcommand{\CAT}{\operatorname{CAT}}
\newcommand{\id}{\operatorname{id}}
\newcommand{\intr}{\operatorname{int}}
\newcommand{\reg}{\operatorname{reg}}
\newcommand{\ay}{\operatorname{aY}}
\newcommand{\harm}{\operatorname{Harm}}
\newcommand{\hm}{\operatorname{Hm}}
\newcommand{\h}{\operatorname{h}}
\newcommand{\width}{\operatorname{width}}
\newcommand{\set}[2]{\{#1:\,\text{#2}\}}
\newcommand{\sm}{\setminus}
\newcommand{\sub}{\subset}
\newcommand{\ov}{\overline}
\newcommand{\wh}{\widehat}
\begin{document}

\title{SRA-free condition by Zolotov for self-contracted curves and nondegeneracy of
zz-distance for M\"obius structures on the circle}
\author{Sergei Buyalo\footnote{Supported by RFFI Grant
17-01-00128a}}

\date{}
\maketitle

 \begin{abstract} SRA-free condition for metric spaces (that is, spaces without Small Rough Angles)
was introduced by Zolotov to study rectifiability of self-contracted curves in
various metric spaces. We give a M\"obius invariant version of this notion which allows to show
that zz-distance associated with a respective M\"obius structure on the circle
is nondegenerate. This result is an important part of a solution to the inverse problem
of M\"obius geometry on the circle.
 \end{abstract}

\noindent{\small{\bf Keywords:} M\"obius structures, cross-ratio, harmonic 4-tuples, self-contracted
curves}

\medskip

\noindent{\small{\bf Mathematics Subject Classification:} 51B10}

\section{Introduction}
\label{introduction}

A {\em M\"obius structure}
$M$
on a set 
$X$
is a class of M\"obius equivalent semi-metrics on
$X$,
where two semi-metrics are equivalent if and only if they have
the same cross-ratios on every 4-tuple of points in 
$X$.
A M\"obius structure usually lives on the boundary at infinity of a hyperbolic space.

The {\em inverse problem} of M\"obius geometry asks to describe M\"obius structures which are
induced by hyperbolic spaces. We are interested in the inverse problem for M\"obius
structures on the circle
$X=S^1$. 
A natural condition for a such structure
$M$ is formulated in 
\cite{Bu18} as {\em monotonicity axiom}

(M) given a 4-tuple
$q=(x,y,z,u)\in X^4$
such that the pairs
$(x,y)$, $(z,u)$
separate each other,
we have
$$|xy|\cdot|zu|>\max\{|xz|\cdot|yu|,|xu|\cdot|yz|\}$$
for some and hence any semi-metric from
$M$.

If we take one of the points of
$q=(x,y,z,u)$,
for example
$u$,
as infinitely remote for a semi-metric
$|\cdot,\cdot|_u\in M$,
then mononicity axiom tells us that
$$|xy|_u>\max\{|xz|_u,|yz|_u\},$$
where
$z$
is between
$x$, $y$
on
$X_u=X\sm\{u\}=\R$.
In particular, if a segment
$xz\sub X_u$
is contained in a segment
$xy\sub X_u$,
then
$|xz|_u<|xy|_u$.
Surprisingly, exactly this condition defines {\em self-contracted} curves in metric
spaces. Self-contracted curves usually appear as gradient curves of convex
functions, and they play an important role in a number of questions. Basic
problem for a self-contrated curve is to establish its rectifiability, see 
\cite{DDDL}, \cite{DDDR}, \cite{DLS}, \cite{LOZ}, \cite{Le}, \cite{Ohta},
\cite{ST}, \cite{Zo18}.

We consider the set 
$\hm$
of unordered harmonic pairs of unordered pairs 
of points in
$X$
as a required filling of a M\"obius structure
$M$
in the inverse problem of M\"obius geometry. Axiom~(M) allows to uniquely define a line 
$\h_a\sub\hm$
as the family of harmonic 4-tuples with a common axis, which is a pair of distinct points
$a=(x,y)$
on
$X$.
Every harmonic pair
$q=(a,b)\in\hm$
has two axes. Thus moving along of a line, we have a possibility
to change the axis of the line at any moment and move along the line
determined by the other axis. This leads to the notion of zig-zag path.
A {\em zig-zag} path, or zz-path, 
$S\sub\hm$
is defined as finite sequence of segments 
$\si_i$
on lines in
$\hm$,
where consecutive segments 
$\si_i$, $\si_{i+1}$
have a common end
$q=\si_i\cap\si_{i+1}\in\hm$
with axes determined by
$\si_i$, $\si_{i+1}$.

We define a distance
$\de$
on
$\hm$
as
$$\de(q,q')=\inf_S|S|,$$
where the infimum is taken over all zz-paths
$S\sub\hm$
from
$q$
to
$q'$,
and
$|S|$
is the length of
$S$
defined as the sum of the lengths of its sides. Its turns out that 
$\de$
is finite and satisfies all the triangle axioms except maybe the positivity one.
We replace axiom~(M) by a slightly stonger axiom~(M($\al$)). 

(M($\al$)) Fix  
$0<\al<1$.
Given a 4-tuple
$q=(x,y,z,u)\in X^4$
such that the pairs
$(x,y)$, $(z,u)$
separate each other, we have
$$|xy|\cdot|zu|\ge\max\{|xz|\cdot|yu|+\al|xu|\cdot|yz|,\al|xz|\cdot|yu|+|xu|\cdot|yz|\}$$
for some and hence any semi-metric from
$M$.

This is a
M\"obius invariant version of the SRA-free condition introduced by 
Zolotov in \cite{Zo18}.

Now, our main result is as follows.

\begin{thm}\label{thm:main} Let 
$M$
be a ptolemaic M\"obius structure of the circle
$X$
which satisfies axiom~(M($\al$)) for some
$0<\al<1$,
such that the 
$M$-topology is the topology of
$S^1$.
Then the distance
$\de$
on
$\hm$
is nondegenerate, 
$\de(q,q')\neq 0$
if and only if
$q\neq q'$,
the 
$\de$-metric 
topology 
on
$\hm$
coincides with one induced from
$X^4$,
and the metric space
$(\hm,\de)$
is complete. In particual,
$(\hm,\de)$
is a proper geodesic metric space.
\end{thm}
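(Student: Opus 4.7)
\medskip

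\noindent\textbf{Proof plan.}

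The plan is to deduce all four conclusions from a bi-Lipschitz comparison of
$\de$
with the restriction
$d$
to
$\hm$
of the product metric on
$X^4$
coming from any fixed semi-metric in
$M$.
Since
$\de$
is already known to be a pseudometric on
$\hm$,
the whole theorem reduces to establishing the two local inequalities
$c\cdot d\le\de\le C\cdot d$
in a neighbourhood of the diagonal of
$\hm\times\hm$.
Once these are in hand, nondegeneracy of
$\de$
and coincidence of topologies are immediate. Since the
$M$-topology equals that of
$S^1$,
the space
$X^4$
is compact and
$\hm$
is closed in
$X^4$
(harmonicity being a closed cross-ratio condition), so
$(\hm,\de)$
is proper and hence complete. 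It is a length space by construction, so it admits
geodesics between any two points by the metric Hopf--Rinow theorem.

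For the upper bound
$\de\le C\cdot d$
near a point
$q\in\hm$,
I would produce an explicit zz-path to a nearby
$q'$
in a bounded number of segments, using continuity of the cross-ratio together with
axiom~(M) to locate a suitable intermediate vertex whose axes interpolate between
those of
$q$
and
$q'$.
Each segment lies on a line on which the complementary pair varies in a controlled way
with arclength, and standard estimates under the ptolemaic hypothesis bound its
$\de$-length by
$d(q,q')$.

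The lower bound is the heart of the matter and the main obstacle. For any zz-path
$S=(\si_1,\dots,\si_n)$
with consecutive vertices
$q_0,\dots,q_n$
and axes
$a_1,\dots,a_n$,
one needs to bound
$d(q_0,q_n)\le C\sum|\si_i|$.
Along a single segment on
$\h_{a_i}$
this reduces to the definition of length along the line, with cross-ratios controlled
by the ptolemaic hypothesis. The delicate step is at a vertex
$q_i=\si_i\cap\si_{i+1}$,
where the axis jumps from
$a_i$
to
$a_{i+1}$:
without a quantitative constraint, consecutive segments could almost cancel and yield
a total length arbitrarily smaller than the net
$d$-displacement.
This is precisely where the stronger axiom~(M($\al$)) enters, as the M\"obius invariant
form of Zolotov's SRA-free condition: it provides a uniform lower bound in
$\al$
on the ``rough angle'' between the outgoing directions of
$\si_i$
and
$\si_{i+1}$
at
$q_i$,
forcing the
$d$-displacements
$d(q_{i-1},q_i)+d(q_i,q_{i+1})$
to add up constructively rather than cancel. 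Telescoping and applying the triangle
inequality for
$d$
then yields
$d(q_0,q_n)\le C|S|$,
and taking the infimum over all zz-paths gives
$d(q_0,q_n)\le C\de(q_0,q_n)$.
Extracting this quantitative non-cancellation at each vertex from axiom~(M($\al$)) is
the step I expect to be technically most demanding, as it requires translating the
SRA-free cross-ratio inequality into a uniform angular estimate between outgoing
directions of two distinct lines in
$\hm$.
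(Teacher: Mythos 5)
Your plan founders at the step where you deduce completeness and properness from compactness of $X^4$: the set $\hm$ is \emph{not} closed in $X^4$. Harmonicity is a closed cross-ratio condition only on the open set $\reg\cP_4$ of nondegenerate 4-tuples, and a sequence of harmonic 4-tuples can degenerate (e.g.\ $z_j\to x$), in which case its limit in $X^4$ lies outside $\hm$; note also that $(\hm,\de)$ cannot be compact, since every line $\h_a$ is isometric to $\R$. So ``closed in a compact space'' gives you nothing, and completeness must be proved directly. The paper does this (Proposition~\ref{pro:complete_de_distance}) by showing that for a $\de$-Cauchy sequence the coordinate sequences $x_j,y_j$ are Cauchy in $X_w$ and $v_j,w_j$ are Cauchy in $X_x$ --- this uses the quantitative estimates of Lemma~\ref{lem:zz_path_length_below} to keep the 4-tuples uniformly nondegenerate --- and then invokes completeness of $X_\om$ (Corollary~\ref{cor:completeness}, itself a consequence of axioms (T) and (P)). Properness then comes from completeness plus local compactness plus the length-space property via Hopf--Rinow, which is the order of deduction you would also need.

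On the lower bound, your diagnosis of where (M($\al$)) enters is in the right ballpark but the mechanism is misdescribed. The direction $d(q_0,q_n)\le C|S|$ needs no control of ``cancellation between consecutive segments'': the triangle inequality for $d$ already gives $d(q_0,q_n)\le\sum_i d(q_{i-1},q_i)$, so the whole issue is the per-segment bound $d(q_{i-1},q_i)\le C|\si_i|$. There the danger is that the length of a segment is a logarithm of a cross-ratio, so under axiom (M) alone one foot of the moving pair could travel far while the other barely moves and the length stays tiny. Axiom (M($\al$)) repairs exactly this via the two-sided ratio bound $\al\le|xu|_w/|yz|_w\le1/\al$ of Lemma~\ref{lem:width_strip_below}, upgraded by the distortion estimate of Lemma~\ref{lem:ratio_distortion} when the infinitely remote point changes. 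The second missing ingredient is that this comparison constant is only local, so you must show the entire zz-path of small total length stays inside a fixed neighborhood $U_\ep(a)\times V_\ep(s)$ of its initial vertex; the paper does this by a bootstrap induction on the number of sides (Lemma~\ref{lem:zz_path_length_below}), arguing that a vertex escaping the neighborhood would force an intermediate point on the boundary, contradicting the ratio-distortion estimate. Finally, be aware that the paper only establishes a topological, not a bi-Lipschitz, comparison in the direction $\de\le\ga(\ep)$ (Lemma~\ref{lem:prescribed_parabolic_shift} and Proposition~\ref{pro:de_metric_topology}); a genuine Lipschitz upper bound is more than the theorem requires and is not obviously available from the construction.
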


\section{M\"obius structures}
\label{sect:moebius_structures}

\subsection{Basic notions}
\label{subsect:basics}

Let
$X$
be a set. A 4-tuple
$q=(x,y,z,u)\in X^4$
is said to be {\em admissible} if no entry occurs three or
four times in
$q$.
A 4-tuple
$q$
is {\em nondegenerate}, if all its entries are pairwise
distinct. Let
$\cP_4=\cP_4(X)$
be the set of all ordered admissible 4-tuples of
$X$, $\reg\cP_4\sub\cP_4$
the set of nondegenerate 4-tuples.

A function
$d:X^2\to\wh\R=\R\cup\{\infty\}$
is said to be a {\em semi-metric}, if it is symmetric,
$d(x,y)=d(y,x)$
for each
$x$, $y\in X$,
positive outside the diagonal, vanishes on the diagonal
and there is at most one infinitely remote point
$\om\in X$
for
$d$,
i.e. such that
$d(x,\om)=\infty$
for some
$x\in X\sm\{\om\}$.
Moreover, we require that if
$\om\in X$
is such a point, then
$d(x,\om)=\infty$
for all 
$x\in X$, $x\neq\om$.
A metric is a semi-metric that satisfies the triangle inequality.

A {\em M\"obius structure}
$M$
on
$X$
is a class of M\"obius equivalent semi-metrics on
$X$,
where two semi-metrics are equivalent if and only if they have
the same cross-ratios on every
$q\in\reg\cP_4$.

Given
$\om\in X$,
there is a semi-metric 
$d_\om\in M$
with infinitely remote point
$\om$.
It can be obtained from any semi-metric
$d\in M$
for which 
$\om$
is not infinitely remote by a {\em metric inversion},
\begin{equation}\label{eq:metric_inversion}
d_\om(x,y)=\frac{d(x,y)}{d(x,\om)d(y,\om)}. 
\end{equation}
Such a semi-metric is unique up to a homothety, see \cite{FS13},
and we use notation
$|xy|_\om=d_\om(x,y)$
for the distance between
$x$, $y\in X$
in that semi-metric. We also use notation
$X_\om=X\sm\{\om\}$.

Every M\"obius structure
$M$
on
$X$
determines the 
$M$-{\em topology}
whose subbase is given by all open balls centered at finite points
of all semi-metrics from
$M$
having infinitely remote points.

\begin{exa}\label{exa:canonical_moebius_circle} Our basic example is the 
{\em canonical} M\"obius structure 
$M_0$
on the circle
$X=S^1$.
We think of
$S^1$
as the unit circle in the plane,
$S^1=\set{(x,y)\in\R^2}{$x^2+y^2=1$}$.
For 
$\om=(0,1)\in X$
the stereographic projection
$X_\om\to\R$
identifies
$X_\om$
with real numbers 
$\R$.
We let
$d_\om$
be the standard metric on
$\R$,
that is,
$d_\om(x,y)=|x-y|$
for any
$x,y\in\R$.
This generates a M\"obius structure on
$X$
which is called {\em canonical}. The basic feature of the canonical M\"obius 
structure on
$X=S^1$
is that for any 4-tuple
$(\si,x,y,z)\sub X$
with the cyclic order 
$\si xyz$
we have 
$d_\si(x,y)+d_\si(y,z)=d_\si(x,z)$.
\end{exa}

\subsection{Harmonic pairs}
\label{subsect:harm_pairs}

From now on, we assume that 
$X$
is the circle,
$X=S^1$.
It is convenient to use unordered pairs 
$(x,y)\sim(y,x)$
of distinct points on
$X$,
and we denote their set by
$\ay=S^1\times S^1\sm\De/\sim$,
where
$\De=\set{(x,x)}{$x\in S^1$}$
is the diagonal. A pair 
$q=(a,b)\in\ay\times\ay$
is harmonic if
\begin{equation}\label{eq:harmonic}
|xz|\cdot|yu|=|xu|\cdot|yz|
\end{equation}
for some and hence any semi-metric of the M\"obius structure, where
$a=(x,y)$, $b=(z,u)$.
The pair
$a$
is called the {\em left} axis of
$q$,
while
$b$
the {\em right} axis. We denote by
$\harm$
the set of harmonic pairs,
$\harm\sub\ay\times\ay$,
of the given M\"obius structure. There is a canonical involution 
$j:\harm\to\harm$
without fixed points given by
$j(a,b)=(b,a)$.
Note that
$j$
permutes left and right axes. The quotient space we denote by
$\hm:=\harm/j$.
In other words,  
$\hm$
is the set of unordered harmonic pairs of unordered pairs of points in
$X$.

\subsection{Axioms}
\label{subsect:axioms}

We list a set of axioms for a M\"obius structure
$M$ 
on the circle
$X=S^1$.

\begin{itemize}
 \item [(T)] Topology: $M$-topology
on
$X$
is that of
$S^1$.

\item[(M($\al$))] Monotonicity: Fix 
$0<\al<1$.
Given a 4-tuple
$q=(x,y,z,u)\in X^4$
such that the pairs
$(x,y)$, $(z,u)$
separate each other, we have

$$|xy|\cdot|zu|\ge\max\{|xz|\cdot|yu|+\al|xu|\cdot|yz|,\al|xz|\cdot|yu|+|xu|\cdot|yz|\}$$
for some and hence any semi-metric from
$M$.

\item[(P)] Ptolemy: for every 4-tuple
$q=(x,y,z,u)\in X^4$
we have
$$|xy|\cdot|zu|\le |xz|\cdot|yu|+|xu|\cdot|yz|$$
for some and hence any semi-metric from
$M$.
\end{itemize}

A M\"obius structure 
$M$
on the circle
$X$
that satisfies axioms T, M($\al$), P is said to be {\em strictly monotone}.

\begin{rem}\label{rem:zolotov} Axiom~M($\al$) is motivated by the work
\cite{Zo18} of V.~Zolotov.
\end{rem}

\begin{rem}\label{rem:axiom_Q} Axiom~P is satisfied, for example,
for the M\"obius structure on the boundary at infinity of any
$\CAT(-1)$
space, see \cite{FS12}.
\end{rem}

\begin{rem}\label{rem:canonical_axioms} The canonical M\"obius structure
$M_0$
on
$X=S^1$
clearly satisfies Axioms~T, M($\al$), P.
\end{rem}

We derive some immediate corollaries from the axioms. It follows from P 
that any semi-metric from
$M$
with an infinitely remote point is a metric, i.e. it satisfies the triange inequality.

\begin{cor}\label{cor:completeness} For every
$\om\in X$
the metric space
$X_\om$
is complete.
\end{cor}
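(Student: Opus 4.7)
The plan is to use the compactness of $X=S^1$ coming from axiom~T together with the metric inversion formula \eqref{eq:metric_inversion} to locate, inside $X_\om$, the limit of any $d_\om$-Cauchy sequence. The observation made immediately before the corollary shows that axiom~P ensures $d_\om$ is a genuine metric, so any $d_\om$-Cauchy sequence $(x_n)\sub X_\om$ is automatically bounded; fix $R$ with $d_\om(x_n,x_1)\le R$ for all $n$. Axiom~T says the $M$-topology on $X$ coincides with the $S^1$-topology, so $X$ is compact and some subsequence $x_{n_k}$ converges in $X$ to a point $x\in X$ in the $S^1$-topology. The task therefore reduces to showing $x\in X_\om$ and $d_\om(x_{n_k},x)\to 0$; the standard Cauchy argument then propagates this to $d_\om$-convergence of the whole sequence.

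For both points I would compare $d_\om$ with an auxiliary finite metric. Pick any $\om''\in X\sm\{\om,x_1,x\}$, available because $X=S^1$ is infinite, and set $d=d_{\om''}\in M$, which is a metric by~P. Because the $d_{\om''}$-balls lie in the defining subbase of the $M$-topology, the $d_{\om''}$-metric topology on $X_{\om''}$ is coarser than the $S^1$-subspace topology, whence $d$ is continuous on $X_{\om''}\times X_{\om''}$ in the $S^1$-topology. Up to a positive multiplicative constant, \eqref{eq:metric_inversion} reads
\[d_\om(a,b)=\frac{d(a,b)}{d(a,\om)\,d(b,\om)},\qquad a,b\in X_\om.\]
If we had $x=\om$ then continuity would give $d(x_{n_k},\om)\to d(\om,\om)=0$ and $d(x_{n_k},x_1)\to d(\om,x_1)>0$, forcing $d_\om(x_{n_k},x_1)\to\infty$ and contradicting the bound $R$. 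Hence $x\in X_\om$, and applying the same formula with $x$ in place of $x_1$ now gives $d_\om(x_{n_k},x)\to 0$, because its numerator tends to $d(x,x)=0$ while its denominator tends to $d(x,\om)^2>0$.

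I do not foresee a serious obstacle. The only delicate bookkeeping is the comparison between the $M$-topology on $X$ and the individual metric topologies $d_{\om'}$ on the sets $X_{\om'}$, but this comparison is essentially tautological once one observes that every $d_{\om'}$-ball belongs to the defining subbase of the $M$-topology; axiom~T then furnishes the continuity of $d$ used above. Neither axiom~M($\al$) nor the existence of harmonic pairs plays any role in this particular statement.
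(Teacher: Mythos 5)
Your proof is correct and rests on the same underlying idea as the paper's (much terser) argument: the paper merely notes that $X_\om$ is homeomorphic to $\R$ and unbounded in the metric, and asserts that completeness ``easily'' follows, the intended mechanism being exactly the one you spell out — compactness of $S^1$ forces a subsequential limit, boundedness of a Cauchy sequence excludes $\om$ as that limit, and continuity of the metric inversion yields convergence in $X_\om$. Your version supplies the details (continuity of an auxiliary metric $d_{\om''}$ via Axioms~T and~P, and the blow-up of $d_\om$ near $\om$) that the paper leaves implicit.
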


\begin{proof} By Axiom~(T), 
$X_\om$
is homeomorphic to
$\R$.
Since
$\om$
is infinitely remote, the space
$X_\om$
is unbounded in the respective metric. These two properties together easily imply that
$X_\om$
is complete.
\end{proof}

A choice of
$\om\in X$
uniquely determines the interval
$xy\sub X_\om$
for any distinct
$x$, $y\in X$
different from
$\om$
as the arc in
$X$
with the end points
$x$, $y$
that does not contain
$\om$.
As an useful implication of Axiom~M($\al$) we have

\begin{cor}\label{cor:interval_monotone} Axiom M($\al$) implies the following.
Assume for a nondegenerate 4-tuple
$q=(x,y,z,u)\in\reg\cP_4$
the interval
$xz\sub X_u$
is contained in
$xy$, $xz\sub xy\sub X_u$.
Then
$|xz|_u<|xy|_u$.
That is, 
$X_u$
is self-contraced for any
$u\in X$.
\end{cor}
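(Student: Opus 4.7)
The plan is to verify that the four points are in a configuration where axiom~M($\al$) applies, and then translate the resulting inequality into the inverted metric at $u$ via formula~(\ref{eq:metric_inversion}).

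First I would read off the geometric configuration. The interval $xy\sub X_u$ is, by definition, the arc from $x$ to $y$ on $S^1$ that does not contain $u$. The hypothesis $xz\sub xy$ thus places $z$ on this arc, so going around the circle the cyclic order is $(x,z,y,u)$. Consequently the pairs $(x,y)$ and $(z,u)$ separate each other, and M($\al$) is applicable to the 4-tuple $(x,y,z,u)$. Taking the first of the two estimates provided by M($\al$), we obtain, for any semi-metric in $M$ for which $u$ is finite,
\begin{equation*}
|xy|\cdot|zu|\ \ge\ |xz|\cdot|yu|+\al\,|xu|\cdot|yz|.
\end{equation*}

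Since $q\in\reg\cP_4$, all four factors on the right are strictly positive, and $\al>0$, so the inequality is in fact strict:
\begin{equation*}
|xy|\cdot|zu|\ >\ |xz|\cdot|yu|.
\end{equation*}
Dividing both sides by the positive quantity $|xu|\cdot|yu|\cdot|zu|$ and using (\ref{eq:metric_inversion}) in the form $|ab|_u=|ab|/(|au|\cdot|bu|)$ on each side yields $|xy|_u>|xz|_u$, which is the claim. The self-contracted property of $X_u$ is then just a restatement: for any parametrization so that the nested condition $xz\sub xy\sub X_u$ holds along the curve, the distance from $x$ to the later point is strictly larger.

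There is really no obstacle here; the corollary is essentially the metric-inversion rewriting of M($\al$). The only point to be careful about is that axiom~M, the non-quantitative version, would be \emph{insufficient}: it only gives $|xy|\cdot|zu|>\max\{|xz|\cdot|yu|,|xu|\cdot|yz|\}$ for \emph{nondegenerate} 4-tuples, and the strict inequality would collapse at configurations where the 4-tuple degenerates in limits. The additive $\al$-term in M($\al$) guarantees a uniform strict gap that survives the division by $|xu|\cdot|yu|\cdot|zu|$ and thus cleanly produces the strict monotonicity $|xz|_u<|xy|_u$.
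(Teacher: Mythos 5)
Your proof is correct and follows the same route as the paper: observe that $xz\sub xy\sub X_u$ forces the pairs $(x,y)$ and $(z,u)$ to separate each other, then apply Axiom~M($\al$); the paper simply states the conclusion while you spell out the (correct) division by $|xu|\cdot|yu|\cdot|zu|$ that converts the cross-ratio inequality into $|xy|_u>|xz|_u$.
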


\begin{proof} By the assumption, the pairs
$(x,y)$, $(z,u)$
separate each other. Hence, by Axiom~M($\al$) we have
$|xz|_u|<|xy|_u$.
\end{proof}

\begin{cor}\label{cor:harm_separate} For any harmonic pair
$((x,y),(z,u))\in\harm$
the pairs
$(x,y)$, $(z,u)\in\ay$
separate each other.
\end{cor}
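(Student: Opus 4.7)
The plan is to argue by contradiction, using the fact that on the circle $X=S^1$ with the $M$-topology (which is that of $S^1$ by Axiom~T) there are exactly three ways to pair up four distinct points into two unordered pairs, and exactly one of these pairings has the property that the two pairs separate each other on the circle.

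Suppose $q=((x,y),(z,u))\in\harm$ is a harmonic pair but the pairs $(x,y)$ and $(z,u)$ do not separate each other on $X$. First, one should note that $x,y,z,u$ are necessarily pairwise distinct: from the harmonic relation $|xz|\cdot|yu|=|xu|\cdot|yz|$ together with positivity of the semi-metric away from the diagonal, an identification such as $x=z$ would force $|xu|\cdot|yz|=0$ while the left-hand side is positive. Hence the cyclic order of $\{x,y,z,u\}$ on $S^1$ is well defined, and of the three possible pairings $\{(x,y),(z,u)\}$, $\{(x,z),(y,u)\}$, $\{(x,u),(y,z)\}$, exactly one consists of two pairs that separate each other. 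Under our hypothesis, this separating pairing is either $\{(x,z),(y,u)\}$ or $\{(x,u),(y,z)\}$.

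In the first case, I apply Axiom~M($\al$) to the 4-tuple $(x,z,y,u)$: since $(x,z)$ and $(y,u)$ separate each other,
$$|xz|\cdot|yu|\ \ge\ \al|xy|\cdot|zu|+|xu|\cdot|yz|.$$
Because $\al>0$ and $|xy|\cdot|zu|>0$, this gives the strict inequality $|xz|\cdot|yu|>|xu|\cdot|yz|$, contradicting the harmonic identity $|xz|\cdot|yu|=|xu|\cdot|yz|$. In the second case, the same axiom applied to $(x,u,y,z)$ yields
$$|xu|\cdot|yz|\ \ge\ \al|xy|\cdot|zu|+|xz|\cdot|yu|,$$
and hence the opposite strict inequality $|xu|\cdot|yz|>|xz|\cdot|yu|$, again contradicting harmonicity. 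Thus the separating pairing must be the original one $\{(x,y),(z,u)\}$.

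There is no real obstacle here; the only subtlety worth naming is the observation that harmonicity forces all four points to be distinct, which licenses us to speak of a cyclic order at all and thereby to identify the pairing that separates. Once this is in place the proof is a one-line application of Axiom~M($\al$), and it is crucial that $\al$ is \emph{strictly} positive so that the inequality in M($\al$) is strict whenever the non-separating product is nonzero.
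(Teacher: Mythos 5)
Your proof is correct, but it takes a different route from the paper's. The paper normalizes by taking the semi-metric with infinitely remote point $u$, rewrites harmonicity as $|xz|_u=|zy|_u$, and invokes Corollary~\ref{cor:interval_monotone} (the self-contractedness of $X_u$): if $z$ did not lie between $x$ and $y$, one of the intervals $xz$, $yz$ would be strictly contained in the other, forcing a strict inequality between $|xz|_u$ and $|yz|_u$. You instead stay with an arbitrary semi-metric, enumerate the three pairings of the four points, and apply Axiom~M($\al$) directly to the two non-separating candidates, using $\al>0$ to get a strict inequality contradicting the harmonic identity. Both arguments ultimately rest on the same strictness in M($\al$); yours is more symmetric and bypasses the intermediate corollary, while the paper's is shorter given that Corollary~\ref{cor:interval_monotone} is already in hand. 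Two small points to tidy: (i) your sentence justifying that the four points are distinct has the sides reversed --- if $x=z$ then the \emph{left}-hand side $|xz|\cdot|yu|$ vanishes, so harmonicity forces $|xu|\cdot|yz|=0$, which is impossible since $x\neq u$ and $y\neq z$ follow from $z\neq u$ and $x\neq y$; (ii) you should note that the semi-metric is chosen so that none of $x,y,z,u$ is infinitely remote, so that all the products appearing in M($\al$) are finite and positive.
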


\begin{proof} In the metric from
$M$
with infinitely remote point
$u$
we have
$|xz|_u=|zy|_u$.
By Corollary~\ref{cor:interval_monotone},
$z$
lies between
$x$, $y$
on the line
$X_u$.
Hence, the pairs
$(x,y)$, $(z,u)$
separate each other.
\end{proof}

\section{Lines and zigzag paths}
\label{sect:lines_zzpath}

Here we briefly recall definitions and some properties of lines and zigzag paths
from \cite{Bu18}.

\subsection{Lines}
\label{subsect:lines}

\begin{lem}\label{lem:project_point_line} Given
$a\in\ay$
and
$x\in X$, $x\notin a$,
there is a uniquely determined
$y\in X$
such that the pair
$(a,b)$
is harmonic, 
$(a,b)\in\hm$,
where
$b=(x,y)$.
\end{lem}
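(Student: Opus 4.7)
Write $a=(p,q)$. My plan is to invert the geometry at $x$ and reduce the harmonic equation to a one-dimensional equidistance problem on $X_x$. Using the metric inversion formula \eqref{eq:metric_inversion} at $x$, a direct algebraic check shows that the harmonic condition $|px|\cdot|qy|=|py|\cdot|qx|$ for $((p,q),(x,y))$ is equivalent to
$$|py|_x=|qy|_x.$$
Since Ptolemy~P makes $|\cdot|_x$ a genuine metric (as observed just before Corollary~\ref{cor:completeness}), and Axiom~T identifies $X_x$ with $\R$ topologically, the lemma reduces to the one-dimensional problem: find a unique $y\in X_x$ equidistant from $p$ and $q$ in $|\cdot|_x$.

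\textbf{Existence.} Identify $X_x\cong\R$ so that $p<q$, and consider $f(y)=|py|_x-|qy|_x$. The function $f$ is continuous in the $M$-topology, since by construction the $M$-topology has the open $|\cdot|_x$-balls in its subbase. Since $f(p)=-|pq|_x<0$ and $f(q)=|pq|_x>0$, the intermediate value theorem applied on the arc $[p,q]\sub X_x$ produces a zero of $f$.

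\textbf{Uniqueness.} The essential tool is Corollary~\ref{cor:interval_monotone} (self-contraction of $X_x$), which I would invoke twice. First, to rule out solutions outside $[p,q]$: a candidate $y<p$ would satisfy the inclusion of arcs $[y,p]\sub[y,q]\sub X_x$, hence $|py|_x<|qy|_x$, so $f(y)<0$; the case $y>q$ is symmetric. Second, to rule out two distinct solutions $p<y_1<y_2<q$: the nested inclusions $[p,y_1]\sub[p,y_2]$ and $[y_2,q]\sub[y_1,q]$ give $|py_1|_x<|py_2|_x$ and $|qy_2|_x<|qy_1|_x$, which together with $f(y_1)=f(y_2)=0$ yield the contradiction $0<|py_2|_x-|py_1|_x=|qy_2|_x-|qy_1|_x<0$. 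Both arguments also show that the unique solution lies in the open arc $(p,q)$, so automatically $y\notin\{p,q,x\}$ and the resulting $b=(x,y)$ is a well-defined element of $\ay$ with $(a,b)\in\hm$ (consistent with Corollary~\ref{cor:harm_separate}).

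\textbf{Main obstacle.} There is no serious obstacle: once the metric-inversion identity is in place, the whole argument is a one-dimensional IVT combined with strict monotonicity, and all the geometric substance is packaged in Corollary~\ref{cor:interval_monotone}. The only point requiring any care is the algebraic step translating harmonicity into equidistance, which is a one-line manipulation of \eqref{eq:metric_inversion}. Axiom~P enters only to upgrade the semi-metric $|\cdot|_x$ to a metric, and Axiom~T only to know that $X_x$ is an arc on which the IVT applies.
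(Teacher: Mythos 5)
Your proof is correct, and its skeleton is the same as the paper's: reduce harmonicity to a one-variable equation by a metric inversion, get existence from continuity of the distance function, and get uniqueness from the self-contractedness statement of Corollary~\ref{cor:interval_monotone}. The one genuine difference is the choice of infinitely remote point. The paper writes $a=(z,u)$ and inverts at $u$, so the equation becomes $|zy|_u=|xz|_u$ (find the ``reflection'' of $x$ across $z$ on the line $X_u$); you invert at the given point $x$ of $b$, so the equation becomes the equidistance condition $|py|_x=|qy|_x$ on $X_x$. Your version buys a slightly cleaner existence step: the sign change of $f(y)=|py|_x-|qy|_x$ at the endpoints of the compact arc $[p,q]\sub X_x$ hands you the intermediate value theorem directly, whereas the paper's formulation implicitly needs $y\mapsto|zy|_u$ to attain the value $|xz|_u$ on an unbounded ray. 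Two small points worth tightening: for continuity of $y\mapsto|py|_x$ it is cleanest to say that $|\cdot|_x$ is a metric by Axiom~P, so this function is $1$-Lipschitz for $|\cdot|_x$ and hence continuous in the (finer) $M$-topology — the paper instead cites \cite[Lemma~4.1]{Bu17}; and each application of Corollary~\ref{cor:interval_monotone} requires the relevant $4$-tuple to be nondegenerate, which your argument does satisfy but does not say explicitly.
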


\begin{proof} Let
$a=(z,u)$.
We take a metric from
$M$
with infinitely remote point
$u$.
The distance function
$y\mapsto|zy|_u$
is continuous on
$X_u$
(see \cite[Lemma~4.1]{Bu17}),
thus there is
$y\in X_u$
with
$|xz|_u=|zy|_u$.
Hence the pair
$(a,b)$,
where
$b=(x,y)$
is harmonic. By Corollary~\ref{cor:interval_monotone}, the point
$y$
is uniquely determined.
\end{proof}

We denote by
$\rho_a(x)=y$
the point
$y$
from Lemma~\ref{lem:project_point_line}. The {\em line} with axis
$a\in\ay$
is defined as the set  
$\h_a\sub\hm$
which consists of all pairs
$q=(a,b)$
with
$b=(x,\rho_a(x))$
where 
$x$
run over an arc in
$X$
determined by
$a$.
This is well defined because
$\rho_a:X\to X$
is involutive,
$\rho_a^2=\id$
(we extend
$\rho_a$
to
$a=(z,u)$
by
$\rho_a(z)=z$, $\rho_a(u)=u$). In this case, we use notation
$x_a:=b$
and say that
$x_a\in\h_a$
is the projection of
$x$
to the line
$\h_a$.

For more about lines see \cite{Bu18}. In particual, every line 
is homeomorphic to the real line 
$\R$,
different points on a line are in {\em strong causal relation}, that is,
either of them lies on an open arc in
$X$
determined by the other one, and vice versa, given 
$b$, $b'\in\ay$
in strong causal relation, there exists a unique line
$\h_a$
through
$b$, $b'$,
see \cite[Lemma~3.2, Lemma~4.2]{Bu18}. In this case, the pair
$a\in\ay$ 
(or the line
$\h_a$)
is called the {\em common perpendicular} to
$b$, $b'$.

The {\em segment}
$qq'$
of a line
$\h_a$
with
$q=(a,b)$, $q'=(a,b')\in\h_a$
is defined as the union of 
$q$, $q'$
and all 
$q''=(a,b'')\in\h_a$
such that 
$b''$
separates
$b$, $b'$.
The last means that 
$b$
and
$b'$
lie on different open arcs in
$X$
determined by
$b''$.
The points
$q$, $q'$
are the {\em ends} of
$qq'$.
The segment
$qq'\sub\h_a$
is homeomorphic to the standard segment
$[0,1]$.

\subsection{Distance between harmonic pairs with common axis}
\label{subsect:distance_harmonic_pairs}

Given two harmonic pairs 
$q$, $q'\in\hm$
with a common axis, say
$q=(a,b)$
and
$q'=(a,b')$,
we define {\em the distance}
$|qq'|$
between them as
\begin{equation}\label{eq:distance}
|qq'|=\left|\ln\frac{|xz'|\cdot|yz|}{|xz|\cdot|yz'|}\right|
\end{equation}
for some and hence any semi-metric on
$X$
from
$M$,
where
$a=(x,y)$, $b=(z,u)$, $b'=(z',u')\in\ay$.

One easily checks that every line
$\h_a\sub\hm$
with this distance is isometric to the real line
$\R$
with the standard distance.

\subsection{Width of a strip}
\label{subsect:width_strip}

We say that a 4-tuple
$p=(a,b)\in X^4$
with
$a=(x,y)$, $b=(u,z)$
is a {\em strip} if
$a$, $b$
are in the strong causal relation and the pairs
$(x,z)$, $(u,y)$
separate each other. Note that
$p'=(b,c)\in X^4$
with
$b=(x,u)$, $c=(y,z)$
is also a strip based on the same 4-tuple
$(x,y,u,z)\in X^4$.

Since the pairs
$a$, $b$
are in the strong causal relation, there is uniquely determined
common perpendicular
$s=(v,w)$
to
$a$, $b$.
We assume that 
$w$
lies on the arc in 
$X$
determined by
$b$
which does not contain
$a$.
We denote
$g=|vx|_w$, $h=|vu|_w$.
Then
$|vy|_w=g$, $|vz|_w=h$
because
$s$
is the common perpendicular to
$a$, $b$,
and
$h>g$
by mononicity and the choice of the infinitely remote point
$w$.
We use notation
$p=(a,b,s)$
for a strip with common perpendicular
$s$.
Note that 
$s$
is uniquely determined by
$(a,b)$,
and we add 
$s$
to fix notation. A strip
$p=(a,b,s)$
is said to be {\em narrow} if
$|xu|_w$, $|yz|_w\le g=|vx|_w=|vy|_w$.

We define the width of the strip
$p$
as the length
$l=\width(p)$
of the segment
$x_su_s=y_sz_s\sub\h_s$
on the line
$\h_s$.

\begin{lem}\label{lem:width_strip_above} For any strip
$p=(a,b,s)$
we have
$$\width(p)\le 2\sqrt{\frac{|xu||yz|}{|xy||zu|}},$$
where
$a=(x,y)$, $b=(u,z)$.
\end{lem}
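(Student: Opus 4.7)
The plan is to reduce everything to an explicit computation in the metric $d_w$, where the configuration becomes very symmetric, and then derive an elementary one-variable inequality.

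First I would determine the cyclic order on $X$ forced by the strip conditions. The strong causal relation of $a,b$ together with the separation of $(x,z)$ and $(u,y)$ pins down the cyclic order $x,u,z,y$, and the requirement that $w$ lie on the arc of $b$ not containing $a$ refines this to $x,u,w,z,y$. Cutting at $w$, the line $X_w=\R$ inherits linear order $u,x,y,z$. The harmonicity conditions $|vx|_w=|vy|_w=g$ and $|vu|_w=|vz|_w=h$ then force the midpoints of $\{x,y\}$ and $\{u,z\}$ in $d_w$ to coincide, so placing $v=0$ I may write $x=-g,\ y=g,\ u=-h,\ z=h$ with $h>g$.

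Next I would compute the width using formula~(3.1). Since $|vx|_w=|vy|_w$, one has $\rho_s(x)=y$, hence $x_s=(s,(x,y))$, and analogously $u_s=(s,(u,z))$. Applying the distance formula on $\h_s$ with common left axis $s=(v,w)$ gives
$$\width(p)=\left|\ln\frac{|vu|\,|wx|}{|vx|\,|wu|}\right|.$$
Expressing each base-metric distance via $d(p,q)=d_w(p,q)\,d(p,w)\,d(q,w)$, the factors involving $d(\cdot,w)$ cancel and the ratio simplifies to $d_w(v,u)/d_w(v,x)=h/g$, so $\width(p)=\ln(h/g)$. In parallel, the cross-ratio $|xu|\,|yz|/(|xy|\,|zu|)$ is M\"obius invariant and therefore equals $|xu|_w\,|yz|_w/(|xy|_w\,|zu|_w)=(h-g)^2/(4gh)$ by direct substitution.

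The inequality then reduces to the scalar statement
$$\ln(h/g)\le\frac{h-g}{\sqrt{gh}},\qquad h>g>0.$$
Setting $s=\sqrt{h/g}>1$ this becomes $2\ln s\le s-1/s$, and I would prove it by writing both sides as integrals over $[1,s]$: one has $2\ln s=\int_1^s 2/r\,dr$ and $s-1/s=\int_1^s(1+1/r^2)\,dr$, so the estimate reduces to the elementary pointwise inequality $2/r\le 1+1/r^2$, i.e.\ $(r-1)^2\ge 0$.

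The main obstacle, such as it is, lies in Step~1: correctly extracting the cyclic order from the two separation hypotheses and fixing coordinates so that the harmonicity relations translate into the symmetric picture $x=-g,y=g,u=-h,z=h$. Once that is in place, the identification $\width(p)=\ln(h/g)$ via formula~(3.1) and the scalar inequality are routine.
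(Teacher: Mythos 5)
Your reduction of the width to $\ln(h/g)$ via formula~(\ref{eq:distance}) is correct and matches the paper, and your final scalar inequality $\ln(h/g)\le (h-g)/\sqrt{gh}$, i.e.\ $l\le 2\sinh(l/2)$, is exactly the elementary estimate the paper uses. The gap is in your Step~1: you cannot ``place $v=0$ and write $x=-g,\ y=g,\ u=-h,\ z=h$.'' The lemma is about an arbitrary strictly monotone M\"obius structure, and the semi-metric $d_w$ on $X_w\cong\R$ is \emph{not} additive along the circular order; additivity ($d_\si(x,y)+d_\si(y,z)=d_\si(x,z)$ for ordered points) is a special feature of the canonical structure $M_0$, as Example~\ref{exa:canonical_moebius_circle} emphasizes. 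The harmonicity of $s$ with $a$ and with $b$ only gives the four distances $|vx|_w=|vy|_w=g$ and $|vu|_w=|vz|_w=h$; it does not determine $|xu|_w$, $|yz|_w$, $|xy|_w$, $|zu|_w$, so your claim that the cross-ratio \emph{equals} $(h-g)^2/(4gh)$ ``by direct substitution'' is false in general.

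The repair is immediate and is precisely what the paper does: axiom~(P) guarantees $d_w$ satisfies the triangle inequality, whence $|xu|_w,|yz|_w\ge h-g$ and $|xy|_w\le 2g$, $|uz|_w\le 2h$, and all four estimates point in the favourable direction, giving
$\frac{|xu|_w|yz|_w}{|xy|_w|uz|_w}\ge\frac{(h-g)^2}{4gh}=\sinh^2(l/2)\ge(l/2)^2$.
So your architecture survives once the equalities are downgraded to these one-sided bounds, but as written the proof invokes a coordinate normalization that the axioms do not permit.
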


\begin{lem}\label{lem:width_strip_below} Let
$p=(a,b,s)$
be a strip with
$a=(x,y)$, $b=(u,z)$, $s=(v,w)$.
Then for the width
$l=\width(p)$
of
$p$
we have
$$2\sinh(l/2)\ge\al(1+\al)\sqrt{\frac{|xu||yz|}{|xy||uz|}}$$
and
$$|xu|_w,|yz|_w\le\frac{e^l-1}{\al(1+\al)}|xy|_w,\quad 
\al\le\frac{|xu|_w}{|yz|_w}\le\frac{1}{\al},$$
where
$\al$
is the constant from Axiom M($\al$).
\end{lem}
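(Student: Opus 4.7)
The plan is to work in the metric $d_w\in M$, using the fact that $s=(v,w)$ is the common perpendicular of $a$ and $b$ to establish $|vx|_w=|vy|_w=g$ and $|vu|_w=|vz|_w=h$ with $0<g<h$, and then to place the six points $x,u,w,z,y,v$ on $S^1$ in the cyclic order forced by the strip condition and the choice of the side for $w$. In this cyclic order the four points $x,u,z,y$ appear cyclically, $w$ lies between $u$ and $z$ on the arc not containing $a$, and $v$ lies between $x$ and $y$ on the arc not containing $w$. A direct evaluation of formula~\eqref{eq:distance} for the $\h_s$-distance between $x_s=(s,a)$ and $u_s=(s,b)$ reduces, after the harmonic cancellations and the limit $w\to\infty$ in $d_w$, to $|\ln(h/g)|$. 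Hence $l=\ln(h/g)$, $e^l-1=(h-g)/g$, and $2\sinh(l/2)=(h-g)/\sqrt{gh}$.

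The heart of the proof is to apply axiom~M($\al$) to four 4-tuples, each containing both $v$ and $w$. Writing M($\al$) in a reference semi-metric $d\in M$ in which $w$ is finite and then dividing the resulting inequality by $d(p,w)d(q,w)d(r,w)$ over the three non-$w$ vertices $p,q,r$ of the 4-tuple converts every term into a $d_w$-distance between two of these vertices, because the $d(\cdot,w)$ factors cancel cleanly. Applied to $\{x,y,v,w\}$ (separating pair $(x,y),(v,w)$) this gives $|xy|_w\ge(1+\al)g$; applied to $\{u,z,v,w\}$ it gives $|uz|_w\ge(1+\al)h$; applied to $\{x,u,v,w\}$ (separating pair $(x,w),(u,v)$), the branch of M($\al$) in which $\al$ multiplies $|xu|\cdot|wv|$ yields $h\ge\al|xu|_w+g$, hence $|xu|_w\le(h-g)/\al$; and the symmetric application to $\{y,z,v,w\}$ gives $|yz|_w\le(h-g)/\al$. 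Because $d_w$ is a genuine metric by axiom~P, the triangle inequality supplies the matching lower bounds $|xu|_w\ge|vu|_w-|vx|_w=h-g$ and $|yz|_w\ge h-g$.

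All three claims now fall out by arithmetic. The double inequality $\al\le|xu|_w/|yz|_w\le1/\al$ is immediate from $h-g\le|xu|_w,|yz|_w\le(h-g)/\al$. The bound $|xu|_w,|yz|_w\le\frac{e^l-1}{\al(1+\al)}|xy|_w$ follows by combining $|xu|_w,|yz|_w\le(h-g)/\al$ with $|xy|_w\ge(1+\al)g$, which yields the factor $(h-g)/(g\al(1+\al))=(e^l-1)/(\al(1+\al))$. For the $\sinh$ inequality, the expression $|xu||yz|/(|xy||uz|)$ is a M\"obius invariant cross-ratio and can therefore be computed in $d_w$; the four bounds combine as
\[
\al(1+\al)\sqrt{\frac{|xu|_w|yz|_w}{|xy|_w|uz|_w}}\le\al(1+\al)\cdot\frac{(h-g)/\al}{(1+\al)\sqrt{gh}}=\frac{h-g}{\sqrt{gh}}=2\sinh(l/2).
\]

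The only real care required is in pinning down the cyclic order of the six points on $S^1$, since this order determines the correct separating pair in each of the four 4-tuples and hence which terms in M($\al$) correspond to which $d_w$-distances after division. Once the combinatorial bookkeeping is in place, every M($\al$) application is a one-line division.
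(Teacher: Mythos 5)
Your proof is correct and follows essentially the same route as the paper's: express everything in $d_w$ via $g=|vx|_w=|vy|_w$, $h=|vu|_w=|vz|_w$, $e^l=h/g$, get the upper bounds $|xu|_w,|yz|_w\le(h-g)/\alpha$ and $|xy|_w\ge(1+\alpha)g$, $|uz|_w\ge(1+\alpha)h$ from the appropriate branches of M($\alpha$), and the lower bound $h-g$ from the triangle inequality. The only cosmetic difference is your choice of 4-tuples ($\{x,u,v,w\}$ where the paper uses $\{v,z,w,y\}$ and says ``similarly''), which yields the identical inequalities.
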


\begin{proof}[Proof of Lemmas~\ref{lem:width_strip_above} and \ref{lem:width_strip_below}]
In the metric
$|\ |_w$
on
$X_w$
with infinitely remote point
$w$
we have 
$|xu|_w$, $|yz|_w\ge h-g$
and also
$|xy|_w\le 2g$, $|uz|_w\le 2h$
by the triange inequality. Thus
$$\frac{|xu|_w|yz|_w}{|xy|_w|uz|_w}\ge\frac{(h-g)^2}{4gh}=\sinh^2(l/2)\ge (l/2)^2,$$
where
$l=\width(p)$,
because
$e^l=h/g$,
see Eq.~(\ref{eq:distance}). This proves Lemma~\ref{lem:width_strip_above}.

The pairs
$(v,z)$
and
$(w,y)$
separate each other. Thus by Axiom M($\al$) we have
$$|vz||wy|\ge\max\{|yz||vw|+\al|zw||vy|,\al|yz||vw|+|zw||vy|\}$$
in any semi-metric of the M\"obius structure M. Taking the metric with
infinitely remote point
$w$,
we obtain
\begin{equation}\label{eq:axiom_estimate}
h\ge\max\{|yz|_w+\al g,\al|yz|_w+g\}.
\end{equation}
The inequality (\ref{eq:axiom_estimate})
implies
$\al|yz|_w\le h-g$.
Similarly
$\al|ux|_w\le h-g$.

Again, by Axiom~M($\al$),
$|xy|_w\ge|xv|_w+\al|vy|_w=(1+\al)g$
and similarly
$|uz|_w\ge(1+\al)h$.
Thus
$$\frac{|yz||xu|}{|xy||uz|}\le\frac{(h-g)^2}{\al^2(1+\al)^2gh}.$$
Since
$e^l=h/g$,
this gives
$$4(\sinh l/2)^2\ge\al^2(1+\al)^2\frac{|yz||xu|}{|xy||uz|},$$
hence the first estimate of Lemma~\ref{lem:width_strip_below}.

Estimates above also give
$|ux|_w\le\frac{1}{\al}(h-g)=\frac{e^l-1}{\al}g=\frac{e^l-1}{\al(1+\al)}|xy|_w$
and similarly
$|yz|_w\le\frac{e^l-1}{\al(1+\al)}|xy|_w$.
Using the inequalities
$|yz|_w$, $|xu|_w\ge h-g$
together with the previous estimates from above, we obtain
the second estimate of Lemma~\ref{lem:width_strip_below}.
\end{proof}

\subsection{Zigzag paths}
\label{subsect:zigzag_paths}

Every harmonic pair
$q=(a,b)\in\hm$
has two axes. Thus moving along of a line, we have a possibility
to change the axis of the line at any moment and move along the line
determined by the other axis. This leads to the notion of zig-zag path.
A {\em zig-zag} path, or zz-path, 
$S\sub\hm$
is defined as finite (maybe empty) sequence of segments 
$\si_i$
in
$\hm$,
where consecutive segments 
$\si_i$, $\si_{i+1}$
have a common end
$q=\si_i\cap\si_{i+1}\in\hm$
with axes determined by
$\si_i$, $\si_{i+1}$.
Segments 
$\si_i$
are also called {\em sides} of
$S$,
while a {\em vertex} of
$S$
is an end of a side. Given 
$q$, $q'\in\hm$,
there is a zz-path
$S$
in
$\hm$
with at most five sides that connects
$q$
and
$q'$
(see \cite[Lemma~3.3]{Bu18}).

\section{Metric on $\hm$}
\label{sect:metric}

\subsection{Distance $\de$ on $\hm$}
\label{subsect:dist_de}

Let
$S=\{\si_i\}$
be a zz-path in
$\hm$.
We define the length of
$S$
as the sum
$|S|=\sum_i|\si_i|$
of the length of its sides. Now, we define a distance
$\de$
on
$\hm$
by
$$\de(q,q')=\inf_S|S|,$$
where the infimum is taken over all zz-paths
$S\sub\hm$
from
$q$
to
$q'$.

One easily sees that 
$\de$
is a finite pseudometric on
$\hm$,
see \cite[Proposition~6.2]{Bu18}.
Now, we show that
$\de$
is a metric.

\begin{lem}\label{lem:ratio_distortion} Fix a harmonic
$q\in\hm$, $q=(a,s)$
with
$a=(x,y)$, $s=(v,w)\in\ay$.
Let
$p'=(a',b',s')$
be a strip with
$a'=(x',y')$, $b'=(u',z')$, $s'=(v',w')$
such that
$w$
lies on the arc in
$X$
determined by
$b'$
that contains
$w'$.
Then
$$\al\be\le\frac{|x'u'|_w}{|y'z'|_w}\le\frac{1}{\al\be}$$
for some
$\be=\be(p',w)<1$.
If, in addition 
$a'$, $b'\in U_\ep(a)$, $s'\in V_\ep(s)$
for a sufficiently small
$\ep$, $0<\ep<1/8$,
where
$U_\ep(a)=\set{(x',y')\in\ay}{$|xx'|_w<\ep,\ |yy'|_w<\ep$}$,
$V_\ep(s)=\set{(v',w')\in\ay}{$|vv'|_x<\ep,\ |ww'|_x<\ep$}$,
then
$\be\ge 1-8\ep$.
In this case,
$$2\sinh(l'/2)\ge c\max\{|x'u'|_w,|y'z'|_w\}$$
with
$c=\frac{\al(1+\al)\sqrt{\al}}{2}\sqrt{1-8\ep}$,
where
$l'=\width(p')$.
\end{lem}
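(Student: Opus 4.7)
The plan proceeds by establishing the ratio bound first in the strip's natural metric $|\ |_{w'}$ and then transporting it to $|\ |_w$.

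First, apply Lemma~\ref{lem:width_strip_below} to $p'$: since $s' = (v', w')$ is the common perpendicular, the point $v'$ is the ``midpoint'' of both pairs $\{x',y'\}$ and $\{u',z'\}$ in $|\ |_{w'}$, and the lemma yields $\al \le |x'u'|_{w'}/|y'z'|_{w'} \le 1/\al$. A direct computation via metric inversion (\ref{eq:metric_inversion}) in a common background semi-metric then gives
\[
\frac{|x'u'|_w}{|y'z'|_w} \;=\; \rho \cdot \frac{|x'u'|_{w'}}{|y'z'|_{w'}}, \qquad \rho \;=\; \frac{|x'w'|_w \cdot |u'w'|_w}{|y'w'|_w \cdot |z'w'|_w},
\]
where $\rho$ factors into two M\"obius-invariant cross-ratios, each degenerating to $1$ as $w' \to w$. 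Setting $\be := \min(\rho, 1/\rho) \le 1$ yields the two-sided ratio bound with $\be = \be(p', w) < 1$, proving the first assertion.

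For the quantitative estimate $\be \ge 1 - 8\ep$ under the closeness assumptions, I would estimate each factor of $\rho$ via the triangle inequality in $|\ |_w$,
\[
\left|\frac{|x'w'|_w}{|y'w'|_w}-1\right| \le \frac{|x'y'|_w}{|y'w'|_w}, \qquad \left|\frac{|u'w'|_w}{|z'w'|_w}-1\right| \le \frac{|u'z'|_w}{|z'w'|_w}.
\]
The hypothesis $a', b' \in U_\ep(a)$ keeps $|x'y'|_w$ and $|u'z'|_w$ uniformly close to $|xy|_w$, while $|ww'|_x < \ep$ transports via metric inversion to a lower bound on $|y'w'|_w, |z'w'|_w$ of order $1/\ep$. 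After the natural homothety-normalization of the semi-metric, these combine to $|\rho - 1| = O(\ep)$, and for $\ep < 1/8$ an explicit bookkeeping of the cross-terms produces $\be \ge 1 - 8\ep$.

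Finally, for the sinh inequality I would use Lemma~\ref{lem:width_strip_below} in its M\"obius-invariant form,
\[
(2\sinh(l'/2))^2 \;\ge\; \al^2(1+\al)^2 \cdot \frac{|x'u'|_w \cdot |y'z'|_w}{|x'y'|_w \cdot |u'z'|_w}.
\]
Assuming WLOG $|x'u'|_w \ge |y'z'|_w$, I substitute $|y'z'|_w \ge \al\be |x'u'|_w$ from the ratio bound and bound $|x'y'|_w, |u'z'|_w$ from above via $a', b' \in U_\ep(a)$; after collecting constants, the claimed inequality $2\sinh(l'/2) \ge c \max\{|x'u'|_w, |y'z'|_w\}$ falls out with $c = \frac{\al(1+\al)\sqrt{\al}}{2}\sqrt{1-8\ep}$. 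The main obstacle lies in the quantitative step for $\be$: the bookkeeping straddles two distinct inversions (at $w$ for $a, b'$ and at $x$ for $s'$), and extracting the precise constant $8\ep$ requires careful tracking of cross-ratio perturbations across these inversions \emph{---} routine in spirit, but arithmetically delicate.
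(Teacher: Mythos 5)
Your proposal follows essentially the same route as the paper's proof: the same metric-inversion correction factor $\rho$ (the paper's $\ga$), the same appeal to Lemma~\ref{lem:width_strip_below} in the semi-metric at $w'$, the same perturbation estimate $|\rho-1|=O(\ep)$ obtained from the triangle inequality together with the lower bound of order $1/\ep$ on $|w'x|_w$ coming from $|ww'|_x<\ep$, and the same substitution of the ratio bound into the M\"obius-invariant $\sinh$ estimate. The only cosmetic difference is that you set $\be=\min(\rho,1/\rho)$ outright, whereas the paper runs a case analysis on the position of $w$ to decide which of $\ga$, $1/\ga$ is below $1$; your shortcut yields the same two-sided bound.
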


\begin{proof} Using the metric inversion (\ref{eq:metric_inversion}), we have
$$|x'u'|_w=\frac{|x'u'|_{w'}}{|x'w|_{w'}|u'w|_{w'}}\quad\textrm{and}\quad
  |y'z'|_w=\frac{|y'z'|_{w'}}{|y'w|_{w'}|z'w|_{w'}}.$$
Thus
$$\frac{|x'u'|_w}{|y'z'|_w}=\ga\frac{|x'u'|_{w'}}{|y'z'|_{w'}},$$
where
$$\ga=\frac{|y'w|_{w'}|z'w|_{w'}}{|x'w|_{w'}|u'w|_{w'}}.$$
By Lemma~\ref{lem:width_strip_below}
\begin{equation}\label{eq:distor_estimate}
2\sinh(l'/2)\ge\al(1+\al)\sqrt{\frac{|x'u'||y'z'|}{|x'y'||u'z'|}}\quad\textrm{and}\quad
 \al\le\frac{|x'u'|_{w'}}{|y'z'|_{w'}}\le\frac{1}{\al}.
\end{equation}
Using that
$|xw|_{w'}=1/|xw'|_w$,
we obtain
$$\ga=\frac{|x'w'|_w|u'w'|_w}{|y'w'|_w|z'w'|_w}.$$
If 
$w'=w$
then
$\ga =1$.
Otherwise, consider two cases:

(1) $w$
lies on the arc in
$X$
between
$w'$
and
$z'$
that does not contain
$y'$.
In this case
$u'w'\sub x'w'$, $y'w'\sub z'w'$, $x'w'\sub y'w'$
in
$X_w$.
Thus
$|u'w'|_w<|x'w'|_w$, $|y'w'|_w<|z'w'|_w$, $|x'w'|_w<|y'w'|_w$
and hence
$$\ga\le\frac{|x'w'|_w^2}{|y'w'|_w^2}<1.$$
In this case, we put
$\be=\ga$
and obtain
$$\al\be\le\frac{|x'u'|_w}{|y'z'|_w}\le\frac{1}{\al\be}$$
because
$\ga<1/\ga$.

(2) $w$
lies on the arc in
$X$
between
$w'$
and
$u'$
that does not contain
$x'$.
In this case
$x'w'\sub u'w'$, $z'w'\sub y'w'$, $z'w'\sub x'w'$
in
$X_w$.
Thus
$|u'w'|_w>|x'w'|_w$, $|z'w'|_w<|y'w'|_w$, $|x'w'|_w>|y'w'|_w$
and hence
$$\ga\ge\frac{|x'w'|_w^2}{|y'w'|_w^2}>1.$$
In this case, we put
$\be=1/\ga$
and obtain
$$\al\be\le\frac{|x'u'|_w}{|y'z'|_w}\le\frac{1}{\al\be}$$
because
$\ga>1/\ga$.

Finally, the condition
$s'\in V_\ep(s)$
implies that
$|w'w|_x<\ep$
and hence
$|w'x|_w>1/\ep$
by the metric inversion (\ref{eq:metric_inversion}).
We put
$t:=|w'x|_w$.

The condition
$a'$, $b'\in U_\ep(a)$
implies
$$t-\ep\le |w'x'|_w, |w'u'|_w\le t+\ep$$
and
$$|w'y|_w-\ep\le|w'y'|_w,|w'z'|_w\le |w'y|_w+\ep,$$
by the triange inequality.

We can assume without loss of generality that
$|xy|_w=1$.
Since 
$w'$
lies on the arc in
$X$
determined by
$a$
that contains
$w$,
we have
$t-1\le|w'y|_w\le t+1$
and hence
$t-1-\ep\le|w'y'|_w\le t+1+\ep$.
Using that
$t>1/\ep$
we obtain
$$\ga\ge\frac{(t-\ep)^2}{(t+1+\ep)^2}\ge\frac{(1-\ep^2)^2}{(1+\ep+\ep^2)^2}\ge (1-2\ep)^2\ge 1-8\ep$$
and similarly
$$\ga\le\frac{(t+\ep)^2}{(t-1-\ep)^2}\le\frac{(1+\ep^2)^2}{(1-\ep-\ep^2)^2}\le (1+2\ep)^2\le 1+8\ep.$$
For the first case, 
$\be=\ga$,
we take the first estimate above and obtain
$\be\ge 1-8\ep$.
For the second case,
$\be=1/\ga$,
we take the second estimate above and obtain
$\be\ge 1/(1+8\ep)\ge 1-8\ep$.

The condition
$a'$, $b'\in U_\ep(a)$
also implies
$|x'y'|_w$, $|u'z'|_w\le 1+2\ep$.
Using (\ref{eq:distor_estimate}), we obtain
$$2\sinh(l'/2)\ge c\max\{|x'u'|_w,|y'z'|_w\}$$
with
$c=\frac{\al(1+\al)\sqrt{\al}}{2}\sqrt{1-8\ep}$.
\end{proof}

\begin{lem}\label{lem:zz_path_length_below} Given
$\ep$, $0<\ep\le1/16$,
there is
$t=t(\ep)>0$
such that for every zz-path
$S$
in
$\hm$
with 
$|S|<t$
and the initial vertex
$q=q_1=(a,s)$, $a=(x,y)$, $s=(v,w)\in\ay$,
the vertices 
$q_i=(a_i,s_i)$, $i\ge 1$,
of
$S$
lie in
$U_\ep(a)\times V_\ep(s)$,
where
$U_\ep(a)=\set{(x',y')\in\ay}{$|xx'|_w<\ep, |yy'|_w<\ep$}$, 
$V_\ep(s)=\set{(v',w')\in\ay}{$|vv'|_x<\ep, |ww'|_x<\ep$}$,
and
$$|q_iq_{i+1}|\ge c\max\{|x_ix_{i+1}|_w,|y_iy_{i+1}|_w\}$$
for every odd
$i\ge 1$,
where
$a_i=(x_i,y_i)\in\ay$,
$$|q_iq_{i+1}|\ge c\max\{|v_iv_{i+1}|_x,|w_iw_{i+1}|_x\}$$
for every even
$i\ge 2$,
where
$s_i=(v_i,w_i)\in\ay$,
and some constant
$c=c(\al)>0$.
\end{lem}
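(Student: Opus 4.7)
My plan is to argue by induction on $i$, simultaneously establishing that all prefix vertices lie well inside $U_\ep(a) \times V_\ep(s)$ and that the desired length lower bound holds on the preceding segment. The main tool is Lemma~\ref{lem:ratio_distortion}, which supplies the bound $|q_iq_{i+1}| \ge c\max\{\ldots\}$ once the strip endpoints are known to lie in $U_\ep(a) \times V_\ep(s)$; the principal obstacle is the apparent chicken-and-egg issue in verifying this hypothesis, which I resolve by a continuity/bootstrap argument along each segment.

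Setup. From Lemma~\ref{lem:ratio_distortion} together with the elementary inequality $2\sinh(l/2) \le l\,e^{l/2}$, one obtains $l \ge c\cdot M$ whenever $l \le 1$, where $M = \max\{|x'u'|_w, |y'z'|_w\}$ and $c = c(\al) > 0$ depends only on $\al$ (using $\ep \le 1/16$, so $1-8\ep \ge 1/2$). Set $t := t(\ep) := \min\{c\ep/4,1\}$. Without loss of generality $\si_1 \sub \h_s$, so $s_i = s_{i+1}$ for odd $i$ (the $a$-axis varies along $\si_i$) and $a_i = a_{i+1}$ for even $i$ (the $s$-axis varies); the opposite convention is handled symmetrically. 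Let $T_j$ denote the $\de$-length of the prefix of $S$ from $q_1$ to $q_j$. The inductive invariant is $\max\{|x_jx|_w, |y_jy|_w\} \le T_j/c$ and $\max\{|v_jv|_x, |w_jw|_x\} \le T_j/c$ for every $j \le i$; since $T_j \le |S| < t$, this forces $q_j \in U_{\ep/4}(a) \times V_{\ep/4}(s)$, well inside $U_\ep \times V_\ep$. The base case $j=1$ is trivial.

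Inductive step, $i \to i+1$, with $i$ odd (the even case is symmetric after exchanging the roles of $a$ and $s$ and using $|\cdot|_x$ in place of $|\cdot|_w$). Write $\ell := |q_iq_{i+1}|$ and parameterize $\si_i \sub \h_{s_i}$ isometrically by $\tau \in [0,\ell]$ via $\tau \mapsto q(\tau) = (a(\tau), s_i)$ with $a(\tau) = (x(\tau), y(\tau))$. I claim $a(\tau) \in U_\ep(a)$ for all $\tau \in [0,\ell]$, so that Lemma~\ref{lem:ratio_distortion} applies to the full strip $p_i = (a_i, a_{i+1}, s_i)$. Let $I := \{\tau \in [0,\ell] : a(\tau) \in U_\ep(a)\}$; it is open and contains $0$. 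For $\tau \in I$ with $[0,\tau] \sub I$, Lemma~\ref{lem:ratio_distortion} applied to the sub-strip $(a_i, a(\tau), s_i)$ of width $\tau$ yields $|x_ix(\tau)|_w, |y_iy(\tau)|_w \le \tau/c$, whence
\[
|x(\tau)x|_w \le |x_ix|_w + \tau/c \le T_i/c + \tau/c \le T_{i+1}/c \le \ep/4,
\]
and similarly for $|y(\tau)y|_w$. Hence $a(\tau) \in U_{\ep/4}(a)$ strictly, and by connectedness of $[0,\ell]$ together with openness of $U_{\ep/4}$, $I = [0,\ell]$.

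Applying Lemma~\ref{lem:ratio_distortion} to the full strip $p_i$ then yields the desired $|q_iq_{i+1}| \ge c\max\{|x_ix_{i+1}|_w, |y_iy_{i+1}|_w\}$, while the invariant propagates: $|x_{i+1}x|_w \le T_i/c + \ell/c = T_{i+1}/c$, similarly for $y_{i+1}$, and the $s$-axis quantities are unchanged because $s_{i+1}=s_i$. A minor bookkeeping point is the arc hypothesis of Lemma~\ref{lem:ratio_distortion} (that $w$ lies on the arc determined by $a_{i+1}$ containing the second point of $s_i$); for $\ep$ small this positioning is preserved by continuity throughout the induction and so holds automatically.
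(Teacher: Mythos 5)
Your proposal is correct and follows essentially the same route as the paper: induction over the sides of $S$, with Lemma~\ref{lem:ratio_distortion} supplying the key estimate and a continuity/connectedness bootstrap along each segment resolving the circularity in verifying its hypotheses (the paper does this via a first-exit-time contradiction at the $\ep/2$-level rather than your open--closed argument). Your explicit cumulative invariant $\max\{|x_jx|_w,|y_jy|_w\}\le T_j/c$ is in fact a slightly more careful piece of bookkeeping than the paper's bare assertion that all $q_i$ stay in $U_{\ep/2}(a)\times V_{\ep/2}(s)$, since it makes transparent why the drift cannot accumulate over many sides.
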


\begin{proof} We put
$c=\frac{\al(1+\al)\sqrt{\al}}{16}$
and take
$t>0$
such that 
$\sinh(t/2)\le c\ep$.

Denote
$s_i=(v_i,w_i)\in\ay$, $i\ge 1$, $s_1=s$.
Assume without loss of generality that the first side
$\si_1=q_1q_2$
of
$S$
lies on the line
$\h_{s_1}$.
Then we have
$s_{i+1}=s_i$
for odd 
$i$,
and
$a_{i+1}=a_i$
for even
$i$.

We argue by induction on the number 
$n$
of sides of
$S$.
The base of the induction:
$n=1$.

We have
$q_1=q=(a,s)$
with
$a=(x,y)$, $s=(v,w)\in\ay$, $q_2=(a_2,s)$
with
$a_2=(x_2,y_2)$,
and 
$s$
is the common perpendicular to the strip
$p_1=(a,a_2)$.
The width of
$p_1$
equals the length
$l_1$
of the side
$q_1q_2$, $|q_1q_2|=l_1=\width(p_1)$.
We assume that
$w$
lies on the arc in
$X$
determined by
$a_2$
that does not contain
$a$,
and that
$p_1$
is narrow, that is,
$|xx_2|_w$, $|yy_2|_w\le |vx|_w=|vy|_w$,
see sect.~\ref{subsect:width_strip}. Note that 
$|vx_2|_w\ge|vx|_w+\al|xx_2|_w$
by Axion~M($\al$). Thus
$e^{l_1}=|vx_2|_w/|vx|_w\ge 1+\al(|xx_2|_w/|vx|_w)$.
Hence if
$l_1\le\ln(1+\al)$,
then
$|xx_2|_w\le|vx|_w$
and similarly
$|yy_2|_w\le|vy|_w$,
i.e.,
$p_1$
is narrow.

By Lemma~\ref{lem:width_strip_below} we have
$$2\sinh(l_1/2)\ge\al(1+\al)\sqrt{\frac{|xx_2||yy_2|}{|xy||x_2y_2|}}$$
and
$$\al\le\frac{|xx_2|_w}{|yy_2|_w}\le\frac{1}{\al}.$$
Taking the normalization
$|xy|_w=1$
for the metric of
$X_w$,
we obtain
$|x_2y_2|_w\le 1+|xx_2|_w+|yy_2|_w\le 1+2|vx|_w\le 2$
by the triange inequality. Hence
$2\sinh(l_1/2)\ge\frac{\al(1+\al)\sqrt{\al}}{2}|xx_2|_w$.
We see that
$$\max\{|xx_2|_w,|yy_2|_w\}\le\frac{4}{\al(1+\al)\sqrt{\al}}\sinh(l_1/2)\le\ep/4$$
for 
$|S|=l_1<t$.
Thus
$q_i$
lies in
$U_{\ep/2}(a)\times\{s\}\sub U_{\ep/2}(a)\times V_{\ep/2}(s)$
for 
$i=1,2$,
and
$$l_1=|q_1q_2|\ge c\max\{|x_1x_2|_w,|y_1y_2|_w\},$$
because
$t\le 1/64$
by our assumptions
$c\le 1/8$, $\ep\le 1/16$,
and thus
$l\ge\sinh(l/2)$
for
$l\le t$.

Assume the lemma is proved for all
$n<k$,
and consider a zz-path 
$S$
with
$k$
sides and
$|S|<t$.
By the inductive assumption,
$q_i\in U_{\ep/2}(a)\times V_{\ep/2}(s)$
for 
$i=1,\dots,k$.
We have to show that the inclusion above holds also for 
$i=k+1$,
and assuming that
$k$
is odd, that
$$|q_kq_{k+1}|\ge c\max\{|x_kx_{k+1}|_w,|y_ky_{k+1}|_w\}.$$ 
Let
$p_k=(q_k,q_{k+1})$
be the 
$k$th
strip with the common perpendicular
$s_k=(v_k,w_k)$.
The condition
$q_k\in U_{\ep/2}(a)\times U_{\ep/2}(s)$
means that 
$a_k\in U_{\ep/2}(a)$
and
$s_k\in V_{\ep/2}(s)$.
Consider a parameterization
$q_k(\tau)$, $0\le\tau\le 1$,
of the segment
$q_kq_{k+1}\sub\h_{s_k}$
with
$q_k(0)=q_k$, $q_k(1)=q_{k+1}$.
Then
$q_k(\tau)=(a_k(\tau),s_k)$
and the map 
$\tau\mapsto a_k(\tau)\in\ay$
is continuous. The assumption
$q_{k+1}\not\in U_{\ep/2}(a)\times U_{\ep/2}(s)$
would imply that there is
$\tau'\in (0,1]$
with
$\max\{|x_kx_k(\tau')|_w,|y_ky_k(\tau')|_w\}=\ep/2$,
where
$a_k(\tau)=(x_k(\tau),y_k(\tau))$.
Since still
$q_k(\tau')\in U_\ep(a)\times V_\ep(s)$,
we can apply Lemma~\ref{lem:ratio_distortion} to conclude that
$$\al\be\le\frac{|x_kx_k(\tau')|_w}{|y_ky_k(\tau')|_w}\le\frac{1}{\al\be}$$
with
$1>\be\ge\sqrt{1-8\ep}\ge 1/2$,
and
$$\sinh(|q_kq_k(\tau')|/2)\ge \frac{\al(1+\al)\sqrt{\al}}{8}\max\{|x_kx_k(\tau')|_w,|y_ky_k(\tau')|_w\}.$$
Using that
$|q_kq_k(\tau')|\le|S|<t$,
we see that
$$\max\{|x_kx_k(\tau')|_w,|y_ky_k(\tau')|_w\}\le\frac{1}{2c}\sinh(|S|/2)<\ep/2$$
in contradiction with the assumption. Thus
$q_{k+1}\in U_{\ep/2}(a)\times V_{\ep/2}(s)$.
Applying Lemma~\ref{lem:ratio_distortion}, we obtain
$$|q_kq_{k+1}|\ge c\max\{|x_kx_{k+1}|_w,|y_ky_{k+1}|_w\}.$$
Arguments for even
$k$
are similar with replacing
$a\leftrightarrow s$
in Lemma~\ref{lem:ratio_distortion}.
\end{proof}

\begin{pro}\label{pro:nondegenerate_delta_distance} The distance
$\de$
on
$\hm$
is nondegenerate, if
$\de(q,q')=0$,
then
$q=q'$
for any
$q$, $q'\in\hm$.
\end{pro}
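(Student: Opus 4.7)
The strategy is to apply Lemma~\ref{lem:zz_path_length_below} to a sequence of zz-paths from $q$ to $q'$ whose lengths tend to zero; its quantitative bounds are precisely designed to give nondegeneracy by a telescoping argument. Fix $\ep=1/16$, let $t=t(\ep)$ be the threshold supplied by that lemma, and assuming $\de(q,q')=0$, for each large $n$ choose a zz-path $S_n$ from $q$ to $q'$ with $|S_n|<\min\{t,1/n\}$. Write $q=(a,s)$, $q'=(a',s')$ with $a=(x,y)$, $s=(v,w)$, $a'=(x',y')$, $s'=(v',w')$; the goal is to show $x=x'$, $y=y'$, $v=v'$, $w=w'$.

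Enumerate the vertices of $S_n$ as $q_i=(a_i,s_i)$, $1\le i\le k_n$, with $q_1=q$, $q_{k_n}=q'$. By Lemma~\ref{lem:zz_path_length_below}, every $a_i\in U_\ep(a)$ and every $s_i\in V_\ep(s)$. In the normalization $|xy|_w=1$ used in that lemma's proof, the $\ep$-balls around $x$ and $y$ in the $|\cdot|_w$-metric are disjoint, so the two entries of each $a_i$ are unambiguously labeled $a_i=(x_i,y_i)$ with $|x_ix|_w,|y_iy|_w<\ep$; similarly $s_i=(v_i,w_i)$. After exchanging the roles of $a$ and $s$ if necessary (the whole argument is symmetric under the involution $j$), we may assume the first side of $S_n$ lies on $\h_{s_1}$, so that $s_{i+1}=s_i$ for odd $i$ and $a_{i+1}=a_i$ for even $i$. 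Telescoping in the metric $|\cdot|_w$ on $X_w$, using $x_i=x_{i+1}$ for even $i$ and the odd-index bound of Lemma~\ref{lem:zz_path_length_below}, gives
$$|xx'|_w=|x_1x_{k_n}|_w\le\sum_{i\text{ odd}}|x_ix_{i+1}|_w\le c^{-1}\sum_{i\text{ odd}}|q_iq_{i+1}|\le c^{-1}|S_n|<\frac{1}{cn}.$$
Sending $n\to\infty$ yields $|xx'|_w=0$, so $x=x'$; the identical estimate for $y$ gives $y=y'$, hence $a=a'$. Performing the symmetric telescoping on even-indexed sides in the metric $|\cdot|_x$ on $X_x$ yields $|vv'|_x=|ww'|_x=0$, so $s=s'$, and therefore $q=q'$.

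The technical heavy lifting is entirely absorbed into Lemma~\ref{lem:zz_path_length_below}; the present proof is essentially a bookkeeping exercise. The one point that deserves care is the consistent labeling of axis endpoints along the zz-path, without which the quantity $|x_ix_{i+1}|_w$ would not name a distance between correctly matched points of successive pairs $a_i$, $a_{i+1}$; this labeling is precisely why $\ep$ must be chosen small relative to the normalization $|xy|_w=1$, and it is the reason for the neighborhoods $U_\ep$, $V_\ep$ in the preceding lemma.
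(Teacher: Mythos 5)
Your proof is correct and follows essentially the same route as the paper: both reduce the statement to Lemma~\ref{lem:zz_path_length_below} applied to zz-paths from $q$ to $q'$ of length tending to zero. The only difference is cosmetic --- the paper simply observes that all vertices, in particular $q'$ itself, lie in $U_\ep(a)\times V_\ep(s)$ for every sufficiently small $\ep$ and lets $\ep\to 0$, whereas you fix $\ep=1/16$ and telescope the per-side lower bounds; both are immediate consequences of the same lemma.
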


\begin{proof} Assume that 
$\de(q,q')=0$
for some
$q$, $q'\in\hm$.
It means that for every
$k\in\N$
there is a zz-path
$S=S(k)$
between
$q$, $q'$
with
$|S|<1/k$.
 
Let
$q=(a,s)$,
where
$a=(x,y)$, $s=(v,w)$.
We fix a sufficiently small
$\ep$, $0<\ep<1/16$,
and take
$t=t(\ep)>0$
as in Lemma~\ref{lem:zz_path_length_below}.
Then for every
$k\in\N$, $k\ge 1/t$,
the zz-path
$S(k)$
satisfies the conclusion of Lemma~\ref{lem:zz_path_length_below}.
In particular, all vertices
$q_i(k)$
of
$S(k)$
lie in
$U_\ep(a)\times V_\ep(s)$.
Thus
$q'=q$
because
$\ep$
is taken arbitrarily.
\end{proof}

\subsection{Completeness of the distance $\de$}
\label{subsect:completeness}

\begin{lem}\label{lem:prescribed_parabolic_shift} Given a harmonic 4-tuple
$q=(a,s)$
with
$a=(x,y)$, $s=(v,w)$,
and 
$v'$
lying on the open arc 
$d$
in
$X$
determined by
$a$
that does not contain
$w$,
there is a harmonic
$q'=(a',s')$
with
$s'=(v',w)$,
which is connected with
$q$
by a zz-path 
$S$
with at most 3 sides. Moreover, if
$s'\in V_\ep(s)$
for a sufficiently small
$\ep>0$,
then
$|S|<\ga$
with
$\ga=\ga(\ep)\to 0$
as
$\ep\to 0$,
where
$V_\ep(s)=\set{(v',w')\in\ay}{$|vv'|_x<\ep, |ww'|_x<\ep$}$.
\end{lem}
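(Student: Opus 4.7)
The plan is to construct a zig-zag path $S$ of three sides from $q=(a,s)$, with $a=(x,y)$ and $s=(v,w)$, to a harmonic pair $q'=(a_2,s')$ with $s'=(v',w)$, and then to bound each side by Lemma~\ref{lem:width_strip_above}. Working in the semi-metric $|\cdot|_w$, the axis condition reads $|xv|_w=|yv|_w=:r$; write $\de=|vv'|_w$. For any $r'>r+\de$, continuity and unboundedness of $X_w$ (Corollary~\ref{cor:completeness}), together with strict monotonicity of $z\mapsto|zv'|_w$ along each ray from $v'$ through $x$ or $y$ (Corollary~\ref{cor:interval_monotone}), produce a unique $a_2=(x_2,y_2)$ with $|x_2v'|_w=|y_2v'|_w=r'$ and with $a$ contained in the arc of $a_2$ not containing $w$. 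Then $(a_2,s')$ is harmonic by construction and $a,a_2$ are in strong causal relation, so by \cite[Lemma~4.2]{Bu18} they admit a unique common perpendicular $s_1=(v_1,w_1)$; the zz-path
\[
[q,(a,s_1)]\subset\h_a,\quad [(a,s_1),(a_2,s_1)]\subset\h_{s_1},\quad [(a_2,s_1),(a_2,s')]\subset\h_{a_2}
\]
connects $q$ to $q'$ in three sides.

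Each side is the width of a strip whose common perpendicular is the shared axis. Inspecting the cyclic order on $X$ (for the first side: $w,v,v_1,w_1$, with $v_1$ between $x$ and $v$ on $d$ and $w_1$ on the complementary arc), one checks that the 4-tuples $(v,w,v_1,w_1)$, $(x,y,x_2,y_2)$, and $(v_1,w_1,v',w)$ all satisfy the cross-separation condition. Thus Lemma~\ref{lem:width_strip_above} applies to each side and, after simplification by metric inversion, yields $|qq_1|\le 2\sqrt{|vv_1|_w/|v_1w_1|_w}$, $|q_1q_2|\le 2\sqrt{|xx_2|_w|yy_2|_w/(|xy|_w|x_2y_2|_w)}$, and $|q_2q'|\le 2\sqrt{|v_1v'|_w/|v_1w_1|_w}$. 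Choosing $r'=r+\sqrt{r\de}$ (the scaling that balances the three widths), axiom M($\al$) applied to $(x_2,x,v',w)$ and $(y_2,y,v',w)$ bounds $|xx_2|_w,|yy_2|_w$ by $O(\sqrt{r\de})$, while M($\al$) also gives $|xy|_w\ge(1+\al)r$ and $|x_2y_2|_w\ge(1+\al)r'$, yielding $|q_1q_2|=O(\sqrt{\de/r})$. For the outer sides, the two harmonic identities for $(a,s_1)$ and $(a_2,s_1)$ — which in Euclidean coordinates on $X_w$ read $(v_1-v)(w_1-v)=r^2$ and $(v_1-v')(w_1-v')=r'^2$ — translate via P and M($\al$) to the estimates $|vv_1|_w,|v_1v'|_w=O(\sqrt{r\de})$ together with $|v_1w_1|_w\gtrsim r^{3/2}/\sqrt{\de}$, so $|qq_1|,|q_2q'|=O(\sqrt{\de/r})$ and therefore $|S|=O(\sqrt{\de/r})$.

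To translate to the hypothesis $s'\in V_\ep(s)$, note that $|ww'|_x=0$ automatically (since $w'=w$), and metric inversion gives $|vv'|_x=|vv'|_w/(|vx|_w|v'x|_w)$ with $|v'x|_w$ of order $r$ by P and M($\al$); hence $|vv'|_x\sim\de/r^2$, so $|vv'|_x<\ep$ forces $\de=O(r^2\ep)$ and $|S|<\gamma(\ep):=Cr^{1/2}\sqrt{\ep}\to 0$ as $\ep\to 0$. The main obstacle is the abstract analysis of the common perpendicular $s_1$: without additive coordinates on $X_w$, one cannot solve the two harmonic cross-ratio identities in closed form, and the bounds $|vv_1|_w,|v_1v'|_w=O(\sqrt{r\de})$ and $|v_1w_1|_w\gtrsim r^{3/2}/\sqrt{\de}$ must be extracted directly from those identities using only M($\al$) and P. Once these key estimates are in hand, the rest assembles routinely through Lemma~\ref{lem:width_strip_above} and metric inversion.
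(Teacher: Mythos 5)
Your three-sided path is essentially the paper's (the paper chooses the intermediate perpendicular $s_1$ as a point of $\h_a$ first and then takes the common perpendicular $a_1$ of $s_1$ and $s'$, whereas you fix the target axis $a_2$ first and then take the common perpendicular $s_1$ of $a$ and $a_2$; either order gives the required zz-path, so the existence half of the lemma is fine). The gap is exactly where you flag it, and it is not a routine one. Your estimates $|vv_1|_w,|v_1v'|_w=O(\sqrt{r\de})$ and $|v_1w_1|_w\gtrsim r^{3/2}/\sqrt{\de}$ for the common perpendicular $s_1=(v_1,w_1)$ come from the identities $(v_1-v)(w_1-v)=r^2$ and $(v_1-v')(w_1-v')=(r')^2$, which encode additivity of $|\cdot|_w$ along $X_w$ and hence hold only for the canonical M\"obius structure. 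Under axioms T, M($\al$), P alone, harmonicity of $(a,s_1)$ gives only $|xv_1|_{w_1}=|yv_1|_{w_1}$, and M($\al$)/P convert such equalities into one-sided inequalities with a loss of a factor $\al$. In particular the required \emph{lower} bound on $|v_1w_1|_w$ seems out of reach: P yields only upper bounds on distances, and the relevant lower bound from the strip lemmas goes the wrong way --- Lemma~\ref{lem:width_strip_below} applied to the strip $(s,s_1)$ with perpendicular $a$ gives $2\sinh(|qq_1|/2)\ge\al(1+\al)\sqrt{|vv_1|_w/|v_1w_1|_w}$, i.e.\ it bounds the side you are trying to make small from \emph{below} by the very ratio you need to control from above.

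The paper identifies precisely this as the obstruction (``there is no constructive way to find the common perpendicular to given lines'') and sidesteps it with a soft topological argument: it parametrizes the candidate paths by $(\tau,t)\in\R\times\R_+$, where $\tau$ is the displacement along $\h_a$ and $t$ the displacement along $\h_{s_1}$, observes that the resulting endpoint $f(\tau,t)\in d$ equals $v$ exactly when $\tau=0$ or $t=0$, and uses continuity of $f$ and connectedness of $A_\ga=[-\ga,\ga]\times[0,\ga]$ to conclude that every $v'$ with $|vv'|_x\le\ep_\ga$ is realized by some $(\tau,t)\in A_\ga$. This makes $|\si_1|,|\si_2|\le\ga(\ep)\to 0$ by construction, with no estimate on the location of the common perpendicular ever needed; only the third side then requires the strip lemmas, and there Lemma~\ref{lem:width_strip_below} applied to the already-short first side gives $|vv_1|_x,|ww_1|_x\le\frac{e^\ga-1}{\al(1+\al)}$. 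To complete your argument you would either have to derive your common-perpendicular estimates from M($\al$) and P --- which I do not believe can be done as stated --- or replace the explicit calibration $r'=r+\sqrt{r\de}$ by a continuity/connectedness argument of this kind.
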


\begin{proof} We construct a zz-path
$S=\{\si_i\}$, $i=1,2,3$,
as follows. We define
$\si_1=ss_1\sub\h_a$
with
$s_1=(v_1,w_1)$
so that 
$v'_a\in\si_1$
(for the notation
$v'_a$
see sect.~\ref{subsect:lines}). Moreover, we can assume that 
$v'_a\in\intr\si_1$
unless
$v'=v$,
in which case
$s_1:=s$,
and the construction of
$S$
is finished with
$\si_1=\si_2=\si_3=q$
degenerated. Then
$\si_1$
is the segment on the line 
$\h_a$
between
$q$ 
and 
$q_1\in\hm$, $q_1=(a,s_1)$.

By construction, the pairs
$s_1$, $s'$
are in the strong causal relation and thus there is a (unique!) common
perpendicular
$a_1=(x_1,y_1)$
to them. We assume that
$v_1$
lies on the arc 
$d$
in
$X$,
and
$x_1$
lies on the arc in
$X$
determined by
$s$
that contains
$x$.

Next, we define
$\si_2:=aa_1\sub\h_{s_1}$
as the segment between
$q_1$
and
$q_2=(a_1,s_1)$,
and finally, we define
$\si_3\sub\h_{a_1}$
as the segment
$\si_3=(v_1)_{a_1}v'_{a_1}=(w_1)_{a_1}w_{a_1}$
between
$q_2$
and
$q_3=(a_1,s')=q'$.
In particular,
$a'=a_1$.
Thus the segment
$\si_1\sub\h_a$
uniquely determines our zz-path
$S=\si_1\si_2\si_3$
connecting
$q$
and
$q'$.

To prove the last assertion of the lemma we parametrize the line
$\h_a\sub\hm$, $h_a=\h_a(\tau)$, $\tau\in\R$,
so that
$\h_a(0)=q$, $\h_a(+\infty)=x$, $\h_a(-\infty)=y$,
and
$|\h_a(\tau)\h_a(\tau')|=|\tau-\tau'|$.
The problem with the zz-path
$S$
is that there is no constructive way to find the common perpendicular
to given lines. We avoid this problem by defining a map 
$f:R_+^2\to d$
as follows. For 
$\tau\in\R$
we take the point
$\h_a(\tau)=:(a,s_1)$
on the line
$\h_a$.
Next, we move along the line
$\h_{s_1}$
by the distance
$t\ge 0$
in the direction 
$v_1\in d$, 
where
$s_1=(v_1,w_1)$.
This gives us the point
$(a_1,s_1)$
on the line
$\h_{s_1}$.
Finally, we move along the line
$\h_{a_1}$
until we reach
$(v',w)\in\h_{a_1}$
for some 
$v'\in d$.
Then we define
$f(\tau,t):=v'$.
By the construction,
$a_1\sub\ov d$
(the closure of the arc 
$d$
in
$X$),
thus the projection
$w_{a_1}\in\h_{a_1}$
is well defined, hence the map 
$f$
is well defined.

Note that
$f(0,t)=v=f(\tau,0)$
for each
$\tau\in\R$, $t\ge 0$,
and that
$f(\tau,t)\neq v$
for any
$\tau\neq 0$, $t>0$,
because otherwise we would have two different common perpendiculars to
the distinct lines
$\h_s$, $\h_{s_1}$.
The map 
$f$
is continuous which follows from uniqueness in Lemma~\ref{lem:project_point_line}.
For 
$\ga>0$
we define
$v_\ga^\pm=f(\pm\ga,\ga)$, $\ep_\ga=\min\{|vv_\ga^\pm|_x\}$.
Since the set 
$$A_\ga=\set{(\tau,t)}{$-\ga\le\tau\le\ga,\ 0\le t\le\ga $}\sub\R_+^2$$
is connected, for every
$0\le\ep\le\ep_\ga$, $(v',w)\in V_\ep(s)$,
there is
$(\tau,t)\in A_\ga$
such that 
$f(\tau,t)=v'$.
Now for any
$0\le\ep\le\ep_1$
and
$(v',w)\in V_\ep(s)$
we take the minimal
$\ga=\ga(\ep)$
such that
$f(\tau,t)=v'$
for 
$(\tau,t)\in A_\ga$.
We have
$|\si_1|$, $|\si_2|\le\ga$,
and clearly,
$\ga(\ep)\to 0$
as
$\ep\to 0$.

Using Lemma~\ref{lem:width_strip_above}, we find
$|\si_3|\le 2\sqrt{\frac{|v_1v'||ww_1|}{|v_1w_1||v'w|}}$,
where
$v'=f(\tau,t)$.
Assuming that
$\tau>0$,
we normalize the metric of
$X_x$
by the condition
$|vw|_x=1$
(in the case
$\tau<0$
we take
$X_y$
instead of
$X_x$).
Then
$|v_1w_1|_x, |v'w|_x\ge|vw|_x=1$.
By Lemma~\ref{lem:width_strip_below}
$$|vv_1|_x,|ww_1|_x\le\frac{e^\ga-1}{\al(1+\al)}|vw|_x=\frac{e^\ga-1}{\al(1+\al)}.$$
By mononicity,
$|v'v_1|_x\le|vv_1|_x$.
Thus
$|\si_3|\le\frac{2(e^\ga-1)}{\al(1+\al)}\le c\ga$
for some constant
$c=c(\al)>0$
as
$\ep\to 0$.
Hence
$S=|\si_1|+|\si_2|+|\si_3|\le (2+c)\ga$,
which completes the proof.
\end{proof}

\begin{pro}\label{pro:de_metric_topology} The 
$\de$-metric
topology on
$\hm$
coincides with that induced from
$X^4$.
\end{pro}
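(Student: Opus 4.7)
The plan is to establish the equality of topologies by verifying both inclusions separately, using Lemma~\ref{lem:zz_path_length_below} and Lemma~\ref{lem:prescribed_parabolic_shift} as the two main tools.

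For the inclusion that $\de$-convergence implies $X^4$-convergence, I would argue directly as follows. Fix $q=(a,s)\in\hm$ with $a=(x,y)$, $s=(v,w)$, and suppose $q_n\to q$ in $(\hm,\de)$. For any $\ep\in(0,1/16)$, once $\de(q,q_n)<t(\ep)$ we may pick a zz-path $S_n$ from $q$ to $q_n$ of length less than $t(\ep)$. Lemma~\ref{lem:zz_path_length_below} then forces every vertex of $S_n$, and in particular the endpoint $q_n$, to lie in $U_\ep(a)\times V_\ep(s)$. Since Axiom~T together with the metric inversion~\eqref{eq:metric_inversion} makes the sets $U_\ep(a)\times V_\ep(s)$ a neighborhood basis of $q$ in the topology induced from $X^4$, this gives $q_n\to q$ in the induced topology.

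The converse inclusion is the substantial part. Given $q_n=(a_n,s_n)\to q=(a,s)$ in the $X^4$-topology, write $a_n=(x_n,y_n)$ and $s_n=(v_n,w_n)$. I would build a zz-path from $q$ to $q_n$ in three stages. In Stage~1, apply Lemma~\ref{lem:prescribed_parabolic_shift} at $q$ to shift $v$ to $v_n$ while keeping $w$, obtaining a 3-side zz-path of length $\ga_1(n)\to 0$ to an intermediate $q'_n=(a'_n,(v_n,w))$. In Stage~2, view the unordered pair $(v_n,w)$ with $w$ playing the role of the first coordinate and apply Lemma~\ref{lem:prescribed_parabolic_shift} at $q'_n$ to shift $w$ to $w_n$ while keeping $v_n$, obtaining a 3-side zz-path of length $\ga_2(n)\to 0$ to a pair $q''_n=(a''_n,s_n)$. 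The arc hypotheses of the lemma hold for $n$ large because $v_n,w_n$ are close to $v,w$ while the ambient axes stay close to $a$ by Lemma~\ref{lem:zz_path_length_below} applied to the zz-path already built. In Stage~3, both $q''_n$ and $q_n$ have common axis $s_n$, so they lie on the line $\h_{s_n}$; the length of the connecting segment is given by formula~\eqref{eq:distance}, and it tends to $0$ because $q''_n$ stays $X^4$-close to $q$ (again by Lemma~\ref{lem:zz_path_length_below}) while $q_n\to q$, so $a''_n$ and $a_n$ become arbitrarily close in $X^4$ and the logarithm in~\eqref{eq:distance} is small. Summing the three stages yields $\de(q,q_n)\to 0$.

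The main obstacle is ensuring uniformity in Stage~2, where Lemma~\ref{lem:prescribed_parabolic_shift} is applied at the varying base point $q'_n$ rather than the fixed $q$. The modulus $\ga(\ep)$ furnished by that lemma \emph{a priori} depends on the base harmonic pair, and one must check that it may be chosen uniformly as $q'_n$ ranges over a small $X^4$-neighborhood of $q$. Inspection of the proof of Lemma~\ref{lem:prescribed_parabolic_shift}, which rests on continuous dependence of the common perpendicular on its endpoints via Lemma~\ref{lem:project_point_line}, shows that the auxiliary map $f$ underlying the $\ga$ estimate varies continuously with the base point, so a compactness argument extracts the required uniform modulus.
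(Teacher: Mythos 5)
Your proposal follows essentially the same route as the paper: one inclusion via Lemma~\ref{lem:zz_path_length_below}, and the other via exactly the same three-stage construction (two applications of Lemma~\ref{lem:prescribed_parabolic_shift} to adjust $v$ and then $w$, followed by a segment on the common line $\h_{s'}$ whose length is controlled by Lemma~\ref{lem:width_strip_above}). You even flag, and sketch a fix for, the uniformity of the modulus $\ga(\ep)$ at the varying intermediate base point, a point the paper's proof passes over silently.
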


\begin{proof} Let
$\tau$
be the topology on
$\hm$
induced from
$X^4$, $\tau_\de$
the 
$\de$-metric
topology. It follows from Lemma~\ref{lem:zz_path_length_below} that
every 
$\tau$-open
neighborhood
$U_\ep(a)\times V_\ep(s)$
of
$q=(a,s)\in\hm$
contains a 
$\de$-metric 
ball
$B_t(q)$
centered at
$q$,
where
$t=t(\ep)$.
Hence,
$\tau$
is not finer than
$\tau_\de$,
i.e.,
every
$\tau$-open
set in
$\hm$
is
$\tau_\de$-open.

Conversely, given a
$\de$-metric
ball
$B_t(q)$
of radius
$t>0$
centered at
$q=(a,s)\in\hm$
we find a sufficiently small
$\ep>0$
such that every point
$q'\in U_\ep(a)\times V_\ep(s)$, $q'=(a',s')$,
is connected with
$q$
by a zz-path of length
$<t$. 
We find a short zz-path
$S$
between
$q$
and
$q'$
as follows. First, we take
$q_1=(a_1,s_1)\in\hm$
with
$s_1=(v',w)\in V_\ep(s)$,
where
$s=(v,w)$, $s'=(v',w')$,
and connect 
$q$
with
$q_1$
by a zz-path
$S_1$
as in Lemma~\ref{lem:prescribed_parabolic_shift} with
$|S_1|<\ga_1(\ep)$.
Applying the same construction, we find a zz-path
$S_2$
between
$q_1$
and
$q_2=(a_2,s')$
with
$|S_2|<\ga_2(\ep)$.
Then
$s'$
is the common perpendicular to
$a'$, $a_2$,
and we connect
$q_2$
with
$q'$
along
$\h_{s'}$.

Recalling the construction of Lemma~\ref{lem:prescribed_parabolic_shift},
we see that the width of the strip between
$a$
and
$a_1$
is at most
$\ga_1(\ep)$,
and the width of the strip between
$a_1$
and
$a_2$
is at most
$\ga_2(\ep)$.
Combining with Lemma~\ref{lem:width_strip_below}, this allows to conclude
that
$a_2\in U_{\ga_3(\ep)}(a)$
for some
$\ga_3(\ep)$
with
$\ga_3(\ep)\to 0$
as
$\ep\to 0$.
By Lemma~\ref{lem:width_strip_above}, we find that
$|q_2q'|\le\ga_4(\ep)$
with
$\ga_4(\ep)\to 0$
as
$\ep\to 0$.
It follows that 
$\de(q,q')\le\ga(\ep)$
for some
$\ga(\ep)$
with
$\ga(\ep)\to 0$
as
$\ep\to 0$.
Taking 
$\ep>0$
sufficiently small, we find that any
$q'\in U_\ep(a)\times V_\ep(s)$
lies in
$B_t(q)$.
Hence
$\tau=\tau_\de$.
\end{proof}

\begin{pro}\label{pro:complete_de_distance} The distance
$\de$
on
$\hm$
is complete.
\end{pro}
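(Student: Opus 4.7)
The approach combines the topological equivalence $\tau = \tau_\de$ of Proposition~\ref{pro:de_metric_topology} with the compactness of $S^1$ from Axiom~T. In outline: a $\de$-Cauchy sequence is eventually trapped in a neighborhood with compact closure in $\ay \times \ay$, from which an $X^4$-subsequential limit may be extracted; this limit is harmonic by continuity of the cross-ratio relation and, via Proposition~\ref{pro:de_metric_topology}, coincides with the $\de$-limit.

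Let $\{q_n\} \subset \hm$, $q_n = (a_n, s_n)$, be $\de$-Cauchy. Fix a small $\ep \in (0, 1/16)$ and choose $N$ so that $\de(q_n, q_N) < t(\ep)$ for all $n \ge N$, where $t(\ep)$ is as in Lemma~\ref{lem:zz_path_length_below}. By Proposition~\ref{pro:de_metric_topology}, the ball $B_{t(\ep)}(q_N)$ is contained in the $X^4$-neighborhood $U_\ep(a_N) \times V_\ep(s_N)$ defined using the infinitely remote points attached to $s_N$ and $a_N$. For $\ep$ sufficiently small the closure of this product neighborhood is compact in $\ay \times \ay$: the factor $U_\ep(a_N)$ shrinks toward $a_N \in \ay$ as $\ep \to 0$ and is therefore bounded away from the diagonal of $S^1 \times S^1$, and likewise for $V_\ep(s_N)$; compactness then follows from that of $S^1$. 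Hence a subsequence $q_{n_k}$ converges in $X^4$ to some $q = (a, s)$.

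To verify $q \in \hm$, note that the four coordinates of $q$ are pairwise distinct because the tail of $\{q_n\}$ remains in a region bounded away from the diagonal, so $a, s \in \ay$; and writing $a = (x,y)$, $s = (v,w)$, the harmonic equation $|xv|\cdot|yw| = |xw|\cdot|yv|$ passes to the limit by continuity of any semi-metric in $M$ that is finite on the relevant compact set. Applying Proposition~\ref{pro:de_metric_topology} in the reverse direction then gives $q_{n_k} \to q$ in $\de$ as well, and a $\de$-Cauchy sequence possessing a $\de$-convergent subsequence $\de$-converges to the same limit. The main subtlety I anticipate is preventing the $X^4$-limit from degenerating, since $\ay$ is not itself compact; this is the reason for confining the tail to $U_\ep(a_N) \times V_\ep(s_N)$, which is precisely what Proposition~\ref{pro:de_metric_topology} delivers.
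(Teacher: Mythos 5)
Your overall scheme --- trap the tail of the $\de$-Cauchy sequence in a relatively compact $\tau$-neighborhood, extract an $X^4$-convergent subsequence, check that the limit is harmonic, and transfer back to $\de$ via Proposition~\ref{pro:de_metric_topology} --- is a genuinely different route from the paper's. The paper instead uses the quantitative lower bounds of Lemma~\ref{lem:zz_path_length_below}, $|q_kq_{k+1}|\ge c\max\{|x_kx_{k+1}|_w,|y_ky_{k+1}|_w\}$ (and the analogous bound for even $k$), summed along the connecting zz-paths, to show that the coordinate sequences $x_j,y_j$ and $v_j,w_j$ are Cauchy in the metric spaces $X_w$ and $X_x$, and then invokes completeness of $X_\om$ (Corollary~\ref{cor:completeness}). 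That route never needs compactness of balls in $X_\om$, only their completeness; yours trades the Cauchy bookkeeping for a compactness argument, which is a reasonable exchange provided the compactness is actually available.

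That is where your proof has a gap as written. The justification ``$U_\ep(a_N)$ shrinks toward $a_N$ as $\ep\to 0$, hence is bounded away from the diagonal, hence has compact closure'' is circular: to know that $\set{x'}{$|xx'|_w\le\ep$}$ sits inside a small arc of $S^1$ around $x$ --- in particular that it stays away from $w$ and from the corresponding ball around $y$ --- you already need closed $|\cdot|_w$-balls to be relatively compact in $X_w$, i.e.\ properness of $X_w$ near $x$. Axiom~T together with completeness does not give this for free: a complete unbounded metric on a space homeomorphic to $\R$ need not be proper. The fact you actually need is that $|xx'|_w\to\infty$ as $x'\to w$ in the topology of $S^1$; this does hold, by the metric inversion (\ref{eq:metric_inversion}) and continuity of the semi-metrics (write $|xx'|_w=d(x,x')/(d(x,w)\,d(x',w))$ for a semi-metric $d\in M$ bounded near $w$, and note $d(x',w)\to 0$ while $d(x,x')\to d(x,w)>0$), but it must be stated and proved rather than absorbed into ``compactness follows from that of $S^1$.'' Once that point is secured, the remaining steps --- relative closedness of $\harm$ in $\ay\times\ay$, nondegeneracy of the limit 4-tuple from the harmonic relation combined with $x\ne y$ and $v\ne w$, and the standard fact that a Cauchy sequence with a convergent subsequence converges --- go through and yield the proposition.
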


\begin{proof} Let
$\{q_i\}\in\hm$
be a Cauchy sequence with respect to the metric
$\de$. 
For a given
$t>0$
we have
$\de(q_i,q_j)<t$
for all sufficiently large 
$i$, $j$.
By approximation, we find for each
$i$, $j$
a zz-path
$S_{ij}$
between
$q_i$, $q_j$
with
$|S_{ij}|\le 2\de(q_i,q_j)$.
Then for all sufficiently large
$i$, $j$
we have
$\de(q_i,q_j)<t/2$
and thus
$|S_{ij}|<t$.

Now, fix 
$\ep>0$, $\ep\le 1/16$,
and apply Lemma~\ref{lem:zz_path_length_below} to the zz-paths
$S_{ij}$
for a fixed
$i$ 
and all sufficiently large
$j$.
Then
$q_j\in U_\ep(a_i)\times V_\ep(s_i)$,
where
$q_i=(a_i,s_i)$.
Moreover, the estimates of Lemma~\ref{lem:zz_path_length_below}
(and the triange inequality) imply that the sequences
$x_j$, $y_j\in X_w$, $v_j$, $w_j\in X_x$
are Cauchy with respect to the metrics
$X_w$, $X_x$,
where
$a_j=(x_j,y_j)$, $s_j=(v_j,w_j)$.
By Corollary~\ref{cor:completeness}, there are limits
$x=\lim x_j$, $y=\lim y_j$, $v=\lim v_j$, $w=\lim w_j$,
and the 4-tuple
$q=(x,y,v,w)$
is harmonic. By Proposition~\ref{pro:de_metric_topology},  
$\de(q,q_j)\to 0$.
\end{proof}

\begin{proof}[Proof of Theorem~\ref{thm:main}] It follows from 
Propositions~\ref{pro:nondegenerate_delta_distance},
\ref{pro:de_metric_topology} and \ref{pro:complete_de_distance}
that the distance
$\de$
on
$\hm$
is nondegenerate, that the 
$\de$-metric
topology coincides with that induced from
$X^4$,
in particual, the metric space 
$(\hm,\de)$
is locally compact, and that 
$(\hm,\de)$
is complete. By definition,
$(\hm,\de)$
is a length space. By Hopf-Rinov theorem,
$(\hm,\de)$
is proper, that is, closed balls are compact. A standard argument shows that
$(\hm,\de)$
is geodesic, i.e., for any two points 
$p$, $q$
with
$\de(p,q)<\infty$
there is a geodesic
$pq$
between them.
\end{proof}


\begin{thebibliography}{ABCD}

\bibitem[Bu17]{Bu17} S.~Buyalo, M\"obius structures and timed causal spaces on the circle,
Algebra i analys, 29:5 (2017), 1-50, translation in St. Petersburg Math. J. 29 (2018), 
no. 5, 715–-747.

\bibitem[Bu18]{Bu18} S.~Buyalo, On the inverse problem of M\"obius geometry on the circle,
arXiv: math. MG/1810.03133.

\bibitem[DDDL]{DDDL} A.~Daniilidis, G.~David, E.~Durand-Cartagena, and A.~Lemenant,
Rectifiability of self-contracted curves in the {E}uclidean space and applications,
The Journal of Geometric Analysis, 25(2):1211--1239, Apr 2015.

\bibitem[DDDR]{DDDR} A.~Daniilidis, R.~Deville, E.~Durand-Cartagena, and L.~Rifford,
Self-contracted curves in {R}iemannian manifolds,
J. Math. Anal. Appl., 457:1333--1352, 2018.

\bibitem[DLS]{DLS} A.~Daniilidis, O.~Ley, and S.~Sabourau, Asymptotic behaviour of 
self-contracted planar curves and gradient orbits of convex functions,
Journal de Mathématiques Pures et Appliquées, 94(2):183 --199, 2010.

\bibitem[FS12]{FS12} T.~Foertsch, V.~Schroeder, Ptolemy circles and Ptolemy segments. 
Arch. Math. (Basel) 98 (2012), no. 6, 571–581. 

\bibitem[FS13]{FS13} T.~Foertsch, V.~Schroeder, Metric M\"obius geometry and 
a characterization of spheres,  Manuscripta Math. 140 (2013), no. 3-4, 613--620.

\bibitem[Le]{Le} A.~Lemenant, Rectifiability of non {E}uclidean planar self-contracted curves,
Confluentes Mathematici, 2017. 

\bibitem[Ohta]{Ohta} Shin-ichi~Ohta, Self-contracted curves in CAT(0)-spaces and their rectifiability,
mathArXiv:1711.09284, 2017

\bibitem[LOZ]{LOZ} N.~Lebedeva, Shin-ichi~Ohta, V.~Zolotov, Self-contracted curves in spaces with 
weak lower curvature bound, mathArXiv:1902.01594.

\bibitem[ST]{ST} E.~Stepanov and Y.~Teplitskaya, Self‐contracted curves have finite length,
Journal of the London Mathematical Society, 96(2):455--481.

\bibitem[Zo18]{Zo18} V.~Zolotov, Subsets with small angles in self-contraced curves,
arXiv:1804.00234 [math.MG], 2018.

 
\end{thebibliography}
\end{document}